\documentclass[12pt]{amsart}
\usepackage{amsmath}
\usepackage{amstext}
\usepackage{amsfonts}
\usepackage{amssymb}
\usepackage{amsthm}
\usepackage{cite}
\usepackage{amsaddr}
\usepackage{color}
\usepackage{enumitem}
\usepackage{hyperref}
\usepackage{mathtools}
\usepackage{microtype}
\usepackage{thmtools}
\usepackage{tikz-cd}
\allowdisplaybreaks[3]

\mathtoolsset{centercolon}

\definecolor{darkblue}{rgb}{0.0,0.0,0.3}
\hypersetup{colorlinks,breaklinks,allcolors=darkblue}

\theoremstyle{plain}
\newtheorem{theorem}{Theorem}
\newtheorem{proposition}{Proposition}
\newtheorem{lemma}{Lemma}

\theoremstyle{definition}

\newtheorem{example}{Example}
\newtheorem{observation}{Observation}

\begin{document}

\title[The Extension of Semigroups on Operator Systems]{The Extension of Unital Completely Positive Semigroups on Operator Systems to Semigroups on $C^*$-algebras}

\author[V.I. Yashin]{Vsevolod I. Yashin}
\address{Steklov Mathematical Institute of Russian Academy of Sciences, Gubkina 8 St., Moscow 119991, Russia}
\email{viyashin@protonmail.com\vspace{-2ex}}

\begin{abstract}
The study of open quantum systems relies on the notion of unital completely  positive semigroups on $C^*$-algebras representing physical systems. The natural generalisation would be to consider
the unital completely positive semigroups on operator systems. We show that any continuous unital completely positive semigroup on matricial system can be extended to a semigroup on a finite-dimensional $C^*$-algebra, which is an injective envelope of the matricial system. In case the semigroup is invertible, this extension is unique.
\end{abstract}

\thanks{Supported by Russian Science Foundation under the grant
no. 19-11-00086.}
\maketitle
\tableofcontents


\section{Introduction} \label{sec:introduction}

The theory of open quantum dynamics is an important and well-developed  field of study \cite{Breuer_Petruccione_2002}. The Markovian dynamics of physical systems may be seen as semigroups of
operators on Banach spaces. Such semigroups were studied for a long time due to the abundance of applications \cite{Engel_Nagel_2000}. The positive operator semigroups have an especially good behavior and a physical meaning \cite{Batkai_Fijvacz_Rhandi_2017}.

The development of quantum mechanics has shown that it is natural to consider  quantum dynamics to be completely positive \cite{Kraus_1983}. At the same time, the notion of complete positivity was widely studied by functional analysts, resulting in the theory of operator systems \cite{Choi_Effros_1977, Paulsen_2003}. The operator systems may be naturally called the ``non-commutative'' versions of Banach spaces with order \cite{Helemskii_2010}. Adopting this idea, we may regard the completely positive unital semigroups on operator systems to be the natural framework for studying quantum Markovian dynamics.

Earlier, the completely positive operator semigroups were studied in case of the $C^*$-algebras, leading to a number of results, probably the most famous being GKSL characterisation of generators
\cite{Lindblad_1976,Gorini_Kossakowski_Sudarchan_1976} and Bhat's dilation theorem \cite{Bhat_1996, Bhat_1999}. In this work we show that the theory of positive semigroups on operator systems can be
included into the theory of semigroups on $C^*$-algebras, at least in finite-dimensional case. That is, we show that any evolution on matricial system can be extended to an evolution on finite-dimensional $C^*$-algebra.

Here is a structure of a paper. In Section \ref{sec:preliminaries} we fix  the notation used and remind basic facts about operator systems and one-parameter operator semigroups. In Section \ref{sec:main_result} we present the results which state that any continuous unital completely positive semigroup on a matricial system can be extended to a semigroup on some $C^*$-algebra, and that the continuous unital completely positive one-parameter groups are extended uniquely. The discussion on some implications of the result is given in Section \ref{sec:discussion}.

\section{Preliminaries} \label{sec:preliminaries}

In this section we fix the notation and remind some notions necessary  for the rest of the paper. The theory of ordered vector spaces may be found in \cite{Aliprantis_Chalambos_Tourky_2007}, the theory of operator systems and operator spaces is given in books \cite{Paulsen_2003, Effros_Ruan_2000, Pisier_2003}, and also a paper \cite{Paulsen_Todorov_Tomforde_2010}, where the connection between ordered vector spaces and operator systems is established. For the introduction to the theory of operator spaces, which are the ``non-commutative'' versions of Banach spaces, the reader may consult books \cite{Pisier_2003, Effros_Ruan_2000, Paulsen_2003, Helemskii_2010}. A deep and clear exposition on the theory of operator semigroups is contained in a book \cite{Engel_Nagel_2000}. The theory of positive operator semigroups is presented in \cite{Batkai_Fijvacz_Rhandi_2017}.

\subsection{Ordered vector spaces} \label{subsec:ordered_vector_spaces}

Let $V$ be a complex vector space. We will be interested in the case $V$ is Banach, i.e. is endowed with topology and norm $\lVert\cdot\rVert$ and is complete. If $V$ is equipped with a complex-conjugate involution $v \mapsto v^*$, then it is called \emph{$*$-vector space}. The space of Hermitian elements is denoted as $V_h = \left\{ v\in V \, \middle| \, v = v^* \right\}$. A \emph{cone} $C$ is a subset of $V$ closed under addition and multiplication by positive numbers. We will be especially interested in the case when a cone $C$ is
\begin{enumerate}
  \item Hermitian: $C \subseteq V_h$,
  \item non-degenerate: $C \cap - C = \{0\}$,
  \item generating: the linear span equals the whole space, $\mathrm{span}(C) = V$,
  \item closed in the topology of $V$.
\end{enumerate}
We will call a vector space $V$ with such a cone an \emph{ordered $*$-vector space}, and we will denote the cone $C$ as $V^+$. The elements of a cone are called \emph{positive}. For two vectors $u$ and $v$ in ordered vector space $V$ it holds $u\leq v$ exactly when $v - u \in V^+$. It is easy to check that this relation is a partial ordering, which is also translation invariant and homogeneous when multiplying by positive numbers.

A \emph{vector lattice} is an ordered vector space in which for any positive vectors $u, v \in V^+$ there is a minimal upper bound $u \vee v$. The operation of taking an absolute value is defined as $|v| = v\vee -v$. The vector lattice is called \emph{Banach lattice} if $|u| \leq |v|$ implies $\lVert u\rVert \leq \lVert v\rVert$. The usual examples of Banach lattices are function spaces with pointwise ordering, such as $L^p(\Omega,\mu)$ and $\mathcal{C}(\Omega)$.

\begin{example}
It is instructive to consider the space of continuously differentiable functions $\mathcal{C}^1([0,1])$ with pointwise ordering. This is an ordered Banach space, but is not a Banach lattice, because the pointwise maximum of two positive functions $f\vee g$ is usually not differentiable. However, the order gives rise to a norm on this space, which is the supremum norm $\lVert \cdot\rVert_\infty$. The completion by this norm constitutes the Banach lattice $\mathcal{C}([0,1])$.
\end{example}

An operator $\phi : V \rightarrow W$ between  ordered spaces is called \emph{positive} when it preserves positive elements, that is $\phi(V^+) \subseteq W^+$. If $\phi : V \rightarrow W$ is a positive map and has a positive inverse map, then it is called an \emph{order isomorphism} of $V$ and $W$. If a positive map $i$ is an isomorphism of $V$ to a subspace of $W$, i.e. $i[V^+] = i[V]\cap W^+$, we will call $i$ an \emph{embedding} and denote $i : V \hookrightarrow W$. Note that the image of positive maps is not necessarily closed, so the injective positive maps are not always embeddings.
\begin{example}
  Let us consider a map $\phi : \mathcal{C}([0,1]) \rightarrow \mathcal{C}([0,1])$, given by
  \begin{equation*}
    \phi[f](T) =
    \begin{cases}
      f(0), & \text{ if } T = 0, \\
      \frac{1}{T}\int_0^T f(t) dt, & \text{ if } 0 < T \leq 1.
    \end{cases}
  \end{equation*}
  This map is positive and preserves unit function, but it's image consists of differentiable maps, so it is not closed in $\mathcal{C}([0,1])$.
\end{example}
The positive map $p$ from $W$ to $V$ is called a \emph{quotient map} if it is surjective and $p[W^+] = V^+$, such maps will be denoted as $p : W \twoheadrightarrow V$.

\subsection{Function systems} \label{subsec:function_systems}

Let $V$ be an ordered vector space. We say that the element $e \in V^+$ is an \emph{order unit} if for every $v \in V_h$ there exists $r>0$ such that $v \leq r e$. The order unit is called \emph{Archimedean} if any vector $v \in V_h$ is positive if and only if $v + re$ is positive for all $r>0$. The spaces with Archimedean order unit are called \emph{function systems}, or \emph{Archimedean order unit (AOU)} spaces \cite{Paulsen_Tomforde_2009}.

We will denote $\langle v, \rho\rangle = \rho(v)$ for  vector $v\in V$ and functional $\rho\in V^*$ a canonical pairing of vector spaces $V$ and $V^*$. A \emph{state} $\rho\in V^*$ is a unital positive functional on a function system, which means $\langle e,\rho\rangle = 1$ and $\langle v,\rho\rangle \geq 0$ for all $v\in V^+$. The space of states on function systems $V$ is denoted $S(V)$. The set $S(V)$ is convex and compact in weak-$*$ topology, the extremal points of it are called \emph{pure states}.

The Archimedean order unit naturally defines the topology on the function system. The norm $\lVert\cdot\rVert_h$ on real vector space $V_h$ is defined as
\begin{equation*}
  \lVert v\rVert_h = \inf\left\{ r > 0 \, \middle| \, -r e \leq v \leq r e \right\}.
\end{equation*}
An \emph{order norm} $\lVert\cdot\rVert$ is a norm on $V$ which satisfies $\lVert v\rVert = \lVert v^*\rVert$ and $\lVert v\rVert = \lVert v\rVert_h$ if $v = v^*$. The minimal order norm is
\begin{equation*}
  \lVert v\rVert = \sup\left\{ |\langle v,\rho\rangle| \, \middle| \, \rho \in S(V) \right\}.
\end{equation*}
It fact, all the order norms are equivalent to the minimal one \cite{Paulsen_Tomforde_2009} and all of them induce the same topology, which is generated by absolutely convex subsets that absorb order intervals (see \cite[Chapter 2]{Aliprantis_Chalambos_Tourky_2007}).

The map $\phi : V\rightarrow W$ is called \emph{unital} if it preserves the order unit, i.e. $\phi[e_V] = e_W$. If $\phi : V \rightarrow W$ is a unital positive map and has a positive inverse map, then it is called an \emph{unital order isomorphism} of $V$ and $W$. If $i$ is a unital isomorphism of $V$ to a subspace of $W$, we will call $i$ an \emph{embedding} and denote $i : V \hookrightarrow W$. The quotient map $p$ between $W$ and $V$ is a unital surjective map such that $p[W^+] = V^+$. We will denote quotient maps as $p : W \twoheadrightarrow V$.

R.V.~Kadison in the work \cite{Kadison_1951} has proven that any function system may be represented as a closed subspace in an algebra of continuous functions $\mathcal{C}(\Omega)$ on a Hausdorff compact $\Omega$ via unital order embedding. To be exact, this map is given as
\begin{equation*}
  V \rightarrow \mathcal{C}(S(V)), \qquad v \mapsto \left( f \mapsto f(v) \right).
\end{equation*}
It is additionally known that if the function system $V$ is a lattice, then the pure states are the lattice homomorphisms, and $V$ is equivalent to $\mathcal{C}(\Omega)$ for the space $\Omega$ of pure states (see \cite{Kakutani_1941} and \cite[Volume 2]{Lindenstrauss_Tzafriri_1996}).

\subsection{Operator spaces}

Let $\mathbb{M}_{n,m} = \mathbb{M}_{n,m}(\mathbb{C})$ be a space of  $n\times m$ matrices with complex coefficients. For a $*$-vector space $V$ let $\mathbb{M}_{n,m}(V)$ be a space of $n \times m$ matrices with elements from $V$. Equivalently, we may think of $\mathbb{M}_{n,m}(V)$ as a tensor product $\mathbb{M}_{n,m} \otimes V$. The space $\mathbb{M}_{n}(V) = \mathbb{M}_{n,n}(V)$ is called an \emph{$n$-matrix level} of $V$. An involution
\begin{equation*}
  (\cdot)^* : (v_{i,j})_{i,j} \mapsto (v_{j,i}^*)_{i,j}
\end{equation*}
makes $\mathbb{M}_{n}(V)$ into a $*$-vector space. We define the multiplication of $v \in \mathbb{M}_{n,m}(V)$ by matrices $\alpha \in \mathbb{M}_{s,n}$ on the left and by $\beta \in \mathbb{M}_{m,p}$ on the right as
\begin{equation*}
  (\alpha v \beta)_{k,l} = \sum_{i,j} \alpha_{k,i} v_{i,j} \beta_{j,l} \in \mathbb{M}_{s,p}(V) .
\end{equation*}
A \emph{matrix norm} $\{ \lVert\cdot\rVert_n \}_n$ is a sequence of norms $\lVert\cdot\rVert_n : \mathbb{M}_{n} \rightarrow \mathbb{R}_+$ on each matrix level, such that the axioms of Ruan \cite{Ruan_1988} hold:
\begin{itemize}
  \item[(R1)] $\lVert\alpha v \beta\rVert_m \leq \lVert\alpha\rVert \lVert v\rVert_n \lVert\beta\rVert$ for all $v \in \mathbb{M}_{n}(V), \; \alpha \in \mathbb{M}_{m,n}, \; \beta \in \mathbb{M}_{n,m}$,
  \item[(R2)] $\lVert x \oplus y\rVert_{n+m} = \max{ \{ \lVert x\rVert_n, \lVert y\rVert_m \} }$ for all $x \in \mathbb{M}_{n}(V), \; y \in \mathbb{M}_{m}(V)$.
\end{itemize}
If $V$ is a Banach space with a matrix norm, then it is called an \emph{operator space}.

Suppose $\phi : V \rightarrow W$ is a bounded linear map between operator spaces.  Denote the $n$th amplification $\phi^{(n)} : \mathbb{M}_{n}(V) \rightarrow \mathbb{M}_{n}(W)$ a map $\mathrm{id}_n \otimes \phi$. Thinking in terms of matrices, it acts as
\begin{equation*}
  \phi^{(n)}
  \left(
    \begin{bmatrix}
      v_{11} & \cdots & v_{1n} \\
      \vdots & \ddots & \vdots \\
      v_{n1} & \cdots & v_{nn}
    \end{bmatrix}
  \right)
  =
  \begin{bmatrix}
    \phi[v_{11}] & \cdots & \phi[v_{1n}] \\
    \vdots & \ddots & \vdots \\
    \phi[v_{n1}] & \cdots & \phi[v_{nn}]
  \end{bmatrix}.
\end{equation*}
For the operator norms at each matrix level it holds
\begin{equation*}
  \lVert\phi\rVert \leq \lVert\phi^{(2)}\rVert \leq \cdots \leq \lVert\phi^{(k)}\rVert \leq \cdots
\end{equation*}
The limit of this norms defines a \emph{completely bounded norm}
\begin{equation*}
  \lVert\phi\rVert_{cb} = \sup_n \lVert\phi^{(n)}\rVert
\end{equation*}
and if $\lVert\phi\rVert_{cb} \leq \infty$, then $\phi$ is called \emph{completely bounded} map. It is known that the bounded maps between ``finite-dimensional'' operator spaces are completely bounded
\begin{lemma}[Smith, \cite{Smith_1983}]
  Let $V$ be a matricially normed space and $\phi : V \rightarrow \mathbb{M}_n$
  a bounded map. Then $\lVert\phi\rVert_{cb} = \lVert\phi^{(n)}\rVert$.
\end{lemma}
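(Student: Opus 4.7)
I would begin by noting that $\lVert\phi^{(n)}\rVert\le\lVert\phi\rVert_{cb}$ is automatic from the definition of the completely bounded norm, so the task reduces to showing $\lVert\phi^{(k)}\rVert\le\lVert\phi^{(n)}\rVert$ for every $k>n$. Fix $v\in\mathbb{M}_{k}(V)$ with $\lVert v\rVert_{k}\le 1$ and view $\phi^{(k)}(v)\in\mathbb{M}_{kn}$ as an operator on $\mathbb{C}^{k}\otimes\mathbb{C}^{n}$; its norm is the supremum of $|\langle\phi^{(k)}(v)\xi,\eta\rangle|$ over unit vectors $\xi,\eta$. My strategy is to absorb the ``oversized'' factor $\mathbb{C}^{k}$ appearing inside $\xi$ and $\eta$ into scalar-matrix contractions of $v$, reducing the estimate to an evaluation of $\phi^{(n)}$.

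The key algebraic step is the following reshape lemma: any $\xi\in\mathbb{C}^{k}\otimes\mathbb{C}^{n}$ factors as $\xi=(A\otimes I_{n})\zeta$ for some $A\in\mathbb{M}_{k,n}$ with $\lVert A\rVert_{op}\le 1$ and some $\zeta\in\mathbb{C}^{n}\otimes\mathbb{C}^{n}$ with $\lVert\zeta\rVert=\lVert\xi\rVert$. I would prove it by identifying $\xi$ with its coefficient matrix $X\in\mathbb{M}_{k,n}$ so that $\xi=(X\otimes I_{n})\Omega$ with $\Omega=\sum_{a}e_{a}\otimes e_{a}\in\mathbb{C}^{n}\otimes\mathbb{C}^{n}$, applying an SVD $X=AC$ with $A\in\mathbb{M}_{k,n}$ an isometry and $C\in\mathbb{M}_{n}$, and setting $\zeta=(C\otimes I_{n})\Omega$. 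The norm identity $\lVert\zeta\rVert=\lVert C\rVert_{HS}=\lVert X\rVert_{HS}=\lVert\xi\rVert$ is a direct computation from the definition of $\Omega$. Applying this to both $\xi$ and $\eta$ yields contractions $A,B\in\mathbb{M}_{k,n}$ and vectors $\zeta,\omega\in\mathbb{C}^{n}\otimes\mathbb{C}^{n}$ of the same norms as $\xi,\eta$.

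The assembly is then short. Expanding $\phi^{(k)}(v)=\sum_{i,j}e_{ij}^{(k)}\otimes\phi(v_{ij})$ on matrix units, I would verify the naturality identity
\begin{equation*}
(B^{*}\otimes I_{n})\,\phi^{(k)}(v)\,(A\otimes I_{n})=\phi^{(n)}(B^{*}vA),
\end{equation*}
where $B^{*}vA\in\mathbb{M}_{n}(V)$ is the scalar-matrix contraction provided by Ruan's axiom (R1). Consequently
\begin{equation*}
\bigl|\langle\phi^{(k)}(v)\xi,\eta\rangle\bigr|=\bigl|\langle\phi^{(n)}(B^{*}vA)\zeta,\omega\rangle\bigr|\le\lVert\phi^{(n)}\rVert\,\lVert B^{*}vA\rVert_{\mathbb{M}_{n}(V)},
\end{equation*}
and (R1) bounds $\lVert B^{*}vA\rVert_{\mathbb{M}_{n}(V)}\le\lVert B\rVert\,\lVert v\rVert_{k}\,\lVert A\rVert\le 1$. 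Taking the supremum over $\xi$ and $\eta$ gives the desired $\lVert\phi^{(k)}\rVert\le\lVert\phi^{(n)}\rVert$.

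The main obstacle I foresee is the reshape step. It rests on the rank bound $\mathrm{rank}(X)\le n$ exploited via SVD to factor $X$ through $\mathbb{C}^{n}$ with operator-norm control on the ``$k$-side'' and exact Hilbert--Schmidt preservation on the ``$n$-side''; spotting that precisely this factorisation is what makes the naturality identity land inside $\mathbb{M}_{n}(V)$ is the conceptual heart of the argument. Once it is available, the remainder is multilinear bookkeeping in which Ruan's axiom (R1) is the decisive ingredient.
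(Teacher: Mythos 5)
The paper does not prove this lemma at all --- it is quoted as a known result with only the citation to Smith's original article --- so there is no in-paper argument to compare against. Your proposal is a correct and complete proof, and it is in fact the standard one (essentially the argument in Smith's paper and in Paulsen's book): the rank bound $\mathrm{rank}(X)\le n$ on the coefficient matrix of a vector in $\mathbb{C}^k\otimes\mathbb{C}^n$, exploited via polar/singular value decomposition to write $\xi=(A\otimes I_n)\zeta$ with $\lVert A\rVert\le 1$ and $\lVert\zeta\rVert=\lVert\xi\rVert$, combined with the naturality identity $(B^*\otimes I_n)\phi^{(k)}(v)(A\otimes I_n)=\phi^{(n)}(B^*vA)$ and Ruan's axiom (R1), is exactly the right mechanism, and all the computations you sketch (the identity $\xi=(X\otimes I_n)\Omega$, the Hilbert--Schmidt norm preservation $\lVert\zeta\rVert=\lVert C\rVert_{HS}=\lVert X\rVert_{HS}$, and the final chain of estimates) check out. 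The only cosmetic caveat is that $A$ need only be a partial isometry when $X$ has rank strictly less than $n$, but since you only use $\lVert A\rVert_{op}\le 1$ this changes nothing.
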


The space of completely bounded maps will be denoted $\mathcal{CB}(V,W)$. The map $\phi$ is called \emph{completely contractive} if $\lVert\phi\rVert_{cb}\leq 1$. The map $\phi$ is called \emph{complete isometry} if all amplifications are isometric, i.e. $\lVert\phi^{(n)}[v]\rVert_{n}= \lVert v\rVert_{n}$ for all $v\in\mathbb{M}_{n}(V)$. If it is also surjective, then $\phi$ is called \emph{complete metric isomorphism}.

\subsection{Operator systems} \label{subsec:operator_systems}

The containing unit $*$-subspaces of general $C^*$-algebras are called \emph{(concrete) operator systems} \cite{Paulsen_2003}. Similar to Kadison's result, M.-D.~Choi and E.G.~Effros were able to characterise operator systems by their order structure \cite{Choi_Effros_1977}.

A family of cones $\{C_n\}_n$ such that $C_n \subseteq \mathbb{M}_{n}(V)_h$ constitutes a \emph{matrix cone} on $V$ if
\begin{equation*}
  \text{for each} \;\alpha \in \mathbb{M}_{n,m} \; \text{it is true that} \; \alpha^* C_n \alpha \subseteq C_m .
\end{equation*}
We will also write $\mathbb{M}_{n}(V)^+$ for $C_n$. The matrix cones provide $V$ with \emph{matrix order}.

An order unit $e$ is called \emph{matrix order unit} if the element $e^{(n)} \in \mathbb{M}_{n}(V)$:
\begin{equation*}
  e^{(n)} =
  \begin{bmatrix}
    e &        & \\
      & \ddots & \\
      &        & e
  \end{bmatrix}
  = I_n \otimes e
\end{equation*}
is an order unit $\mathbb{M}_{n}(V)$ on each matrix level. If the matrix order unit $e^{(n)}$ is also Archimedean for each $\mathbb{M}_{n}(V)$, then it is called \emph{Archimedean matrix order unit}. An \emph{(abstract) operator system} is a matrix ordered $*$-vector space with Archimedean matrix order unit. Let us note that the matrix cone endows $V$ with a matrix norm structure, given by
\begin{equation*}
  \lVert v\rVert_n = \inf\left\{ r > 0 \; \middle| \, \begin{bmatrix} r I_n & v \\ v^* & r I_n \end{bmatrix} \in \mathbb{M}_{2n}(V)^+ \right\}.
\end{equation*}

Let $V$ and $W$  be matrix ordered vector spaces and  $\phi : V \rightarrow W$ a linear map between them. A map $\phi$ is called \emph{completely positive} if each $\phi^{(n)}$ is positive, i.e. it sends $\mathbb{M}_{n}(V)^+$ into $\mathbb{M}_{n}(W)^+$. We will denote the space of unital completely positive maps between $V$ and $W$ as $\mathrm{UCP}(V,W)$. The unital completely positive maps are always completely contractive and $\lVert\phi\rVert_{cb} = 1$. On the other hand, all completely contractive unital maps are completely positive.

If a unital completely positive map $\phi : V \rightarrow W$ has a completely positive inverse map, then it is called a \emph{unital complete order isomorphism}. The unital completely positive map $i : V \hookrightarrow W$ is called a \emph{unital complete order embedding} if it is an isomorphism of $V$ to a subsystem of $W$. The map $p : V\twoheadrightarrow W$ is a \emph{unital completely quotient map} if it is quotient on each matrix level. The theory of quotients for operator systems is presented in \cite{Kavruk_Paulsen_Todorov_Tomforde_2013}.

Let $\mathcal{H}$ be a Hilbert space and $\mathcal{B}(\mathcal{H})$ the algebra of bounded operators on it. The representation theorem of Choi and Effros states that each abstract operator system is completely order isomorphic to a $*$-subspace of $\mathcal{B}(\mathcal{H})$ that contains identity operator $I$.

\begin{theorem}[Choi--Effros, \cite{Choi_Effros_1977}]
  Let $V$ be an abstract operator system. Then there exists a Hilbert space $\mathcal{H}$, a concrete operator system $S \subseteq \mathcal{B}(\mathcal{H})$ and a unital complete order isomorphism $\phi : V \rightarrow S$.
\end{theorem}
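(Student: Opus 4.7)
The plan is to produce a unital complete order embedding $\Phi : V \hookrightarrow \mathcal{B}(\mathcal{H})$ by taking a direct sum of sufficiently many UCP representations of $V$ into finite-dimensional matrix algebras; the image is then the required concrete operator system.

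First I would show that matrix positivity is detected by states at every level. Since $e^{(n)}$ is an Archimedean matrix order unit, each $\mathbb{M}_{n}(V)$ is itself a function system in which $\mathbb{M}_{n}(V)^+$ is closed and $e^{(n)}$ is an interior point of the cone. A Hahn--Banach separation argument then yields: for every $v\in\mathbb{M}_{n}(V)_h \setminus \mathbb{M}_{n}(V)^+$, there is a state $s : \mathbb{M}_{n}(V)\to\mathbb{C}$ with $s(v)<0$.

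Second, I would convert such a state into a UCP map into a matrix algebra. The idea is a Choi-type formula, roughly $\phi_s(w)_{ij} = n\,s(E_{ij}\otimes w)$, which, after averaging $s$ over unitary conjugation in the $\mathbb{M}_{n}$ factor to enforce $s(E_{ii}\otimes e) = 1/n$, yields a map $\phi_s : V \to \mathbb{M}_{n}$. The matrix-cone axiom $\alpha^*\mathbb{M}_{n}(V)^+\alpha \subseteq \mathbb{M}_{m}(V)^+$, applied to rank-one $\alpha$, ensures $\phi_s$ is completely positive; the normalisation makes it unital; and a direct computation shows that a specific combination of entries of $\phi_s^{(n)}(v)$ equals $n\,s(v)<0$, so $\phi_s^{(n)}(v)\notin\mathbb{M}_{n^2}^+$.

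Finally I would assemble the embedding. Let $\mathcal{H} = \bigoplus_{n,\phi}\mathbb{C}^n$, where the outer sum runs over all pairs $(n,\phi)$ with $\phi\in\mathrm{UCP}(V,\mathbb{M}_{n})$, and set $\Phi = \bigoplus\phi$. Each summand is UCP and unital, so $\Phi$ is UCP and unital into $\mathcal{B}(\mathcal{H})$; by the first two steps $\Phi^{(n)}$ reflects positivity on every matrix level, hence $\Phi$ is a unital complete order isomorphism onto the unital self-adjoint subspace $S:=\Phi(V)\subseteq\mathcal{B}(\mathcal{H})$, which is the desired concrete operator system.

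The hard part will be Step~2: reconstructing a finite-dimensional UCP representation from an abstract positive functional on the matrix cone. The matrix-cone axiom is tailor-made for this passage, but one must carefully reconcile complete positivity, unitality, and the requirement that the separating inequality $s(v)<0$ is genuinely inherited by the amplification $\phi_s^{(n)}(v)$.
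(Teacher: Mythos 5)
Your construction is exactly the one the paper indicates (it gives no proof beyond the displayed embedding $v \mapsto (\varphi(v))_\varphi$ into $\bigoplus_{n}\bigoplus_{\varphi \in \mathrm{UCP}(V,\mathbb{M}_{n})}\mathbb{M}_{n}$), and it is the standard Choi--Effros argument: separate a non-positive $v \in \mathbb{M}_{n}(V)_h$ from the closed cone by a state on the function system $\mathbb{M}_{n}(V)$, convert that state into a completely positive map $V \to \mathbb{M}_{n}$ via the Choi-type correspondence, and take the direct sum over all such maps. The one step that does not work as written is your unitality fix in Step~2: averaging $s$ over unitary conjugation in the $\mathbb{M}_{n}$ factor does force $\phi_s(e)$ to be scalar, but it replaces $s(v)$ by the average of $s\bigl((u\otimes 1)^* v (u\otimes 1)\bigr)$ over the unitary group, and this average need not remain negative -- so the separating inequality can be destroyed. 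The standard repair is different: keep $s$ as it is, note that $P := \phi_s(e)$ is a positive matrix of trace one, compress to the support projection of $P$ and conjugate by $P^{-1/2}$ there (or first perturb $s$ slightly to make $P$ invertible). This produces a unital completely positive map into some $\mathbb{M}_{k}$ with $k \le n$ whose $n$th amplification still fails to be positive at $v$, because conjugation by an invertible positive matrix preserves non-positivity and the relevant entries of $v$ are supported where $P$ is. With that substitution your argument is complete and coincides with the paper's intended proof.
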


This isomorphism is obtained by an embedding
\begin{equation*}
  V \rightarrow \bigoplus_{n \in \mathbb{N}} \bigoplus_{\varphi \in \mathrm{UCP}(V,\mathbb{M}_{n})} \mathbb{M}_{n}, \quad
  v \mapsto (\varphi(v))_\varphi .
\end{equation*}

The operator system $V$ that is embeddable into a finite-dimensional $\mathbb{M}_{n}$ is called a \emph{matricial system} \cite{Arveson_2010}. For matricial systems, in order to check the complete positivity of a map $\phi$ from or into $V$ it is sufficient to check the positivity of $\phi^{(n)}$. This statement is often refereed to as \emph{Smith's lemma} \cite{Smith_1983}.

We pay reader's attention to the distinction: the function systems should be treated as the spaces of (affine) functions, while the operator systems should be treated as the spaces of operators. The operator systems are endowed with an additional structure of matrix order, which acts similarly to a ``topology''. Each operator system is a function system, but on the given function system there may be a lot of different matrix order structures \cite{Paulsen_Todorov_Tomforde_2010}.

\subsection{Operator semigroups} \label{subsec:operator_semigroups}

Let $V$ be a Banach space, $\mathcal{B}(V)$ is an algebra  of bounded operators on $V$ and let $\mathbb{R}_+$ be a semigroup of positive real numbers. A \emph{one-parameter operator semigroup} is a semigroup homomorphism $ \Phi : \mathbb{R}_+ \rightarrow \mathcal{B}(V) $. Equivalently, an operator semigroup is a set of operators $\{\Phi(t)\}_{t\geq 0}$, such that
\begin{equation*}
  \Phi(0) = \mathrm{id}, \quad \Phi(s) \Phi(t) = \Phi(t+s),
\end{equation*}
where $\mathrm{id}$ is an identity operator and $t,s\in\mathbb{R}_+$. An operator semigroup  is called \emph{strongly continuous} if for each $v\in V$ the orbit of this element is continuous in strong operator topology, which means
\begin{equation*}
  \lim_{t \downarrow 0} \left\lVert \Phi(t) v - v \right\rVert = 0 \; \text{ for all } v \in V.
\end{equation*}
An operator semigroup is called \emph{uniformly continuous} when the orbit is continuous in norm operator topology, that is
\begin{equation*}
  \lim_{t \downarrow 0} \left\lVert \Phi(t) - I \right\rVert_{\mathcal{B}(V)} = 0,
\end{equation*}

For any strongly continuous semigroup there is a densely defined closable operator $A : \mathcal{D}(A) \rightarrow V $, called a \emph{generator} of a semigroup. It is given by a formula
\begin{equation*}
  Av = \lim_{t \downarrow 0} \frac{\Phi(t) v - v}{t} \; \text{ for all } v\in\mathcal{D}(A).
\end{equation*}
It is useful to examine the resolvent of an operator $R(\lambda,A)$, defined on a resolvent set
\begin{equation*}
  \rho(A) = \mathbb{C} \setminus \sigma(A) = \left\{ \lambda\in\mathbb{C} \, \middle| \, \lambda - A \text{ has bounded inverse }\right\}
\end{equation*}
and given by a formula
\begin{equation*}
  R(\lambda,A) = (\lambda - A)^{-1}.
\end{equation*}
The resolvent function satisfies the \emph{Hilbert's identity}
\begin{equation} \label{eq:Hilberts_identity}
  \frac{R(\lambda,A)-R(\omega,A)}{\lambda-\omega} = - R(\lambda,A)R(\omega,A)
\end{equation}
for all $\lambda,\omega \in \rho(A)$. This identity implies that the resolvent $R(\lambda,A)$ is holomorphic on the parameter $\lambda$. Also, for strongly continuous semigroups it holds that
\begin{equation*}
  \lim_{\lambda\rightarrow\infty}\lVert \lambda R(\lambda,A) v - v \rVert = 0 \text{ for all } v\in V.
\end{equation*}

The semigroup $\Phi$ is uniformly continuous exactly in case $A$ is a bounded operator. In case $A$ is unbounded, the semigroup is strongly continuous.

Studying spectral properties of a generator $A$, one may find a lot of information about the semigroup $\Phi$. To a generator $A$ we associate a \emph{spectral bound} $s(A)$, which is the largest real part of a spectrum
\begin{equation*}
  s(A) = \mathrm{sup}\left\{ \mathrm{Re}(\lambda) \, \middle| \, \lambda \in \sigma(A) \right\}.
\end{equation*}
The \emph{growth bound} of a semigroup is a number
\begin{equation*}
  \omega_0 = \inf_{t > 0} \frac{\log\lVert\Phi(t)\rVert}{t}.
\end{equation*}
The spectral bound and growth bound correspond as
\begin{equation*}
  -\infty \leq s(A) \leq \omega_0 < +\infty.
\end{equation*}
For $\lambda$ such that $\mathrm{Re}(\lambda) > \omega_0$ the resolvent is a Laplace  integral (in Bochner's sense) of $\Phi$:
\begin{equation} \label{eq:resolvent_as_Laplace}
  R(\lambda,A) = \int_0^\infty e^{- \lambda t} \Phi(t) d t .
\end{equation}
The operator semigroup is called \emph{contractive semigroup} if every operator  in a semigroup is contractive, that is $\lVert\Phi(t)\rVert \leq 1$ for all $t \geq 0$. For contractive semigroups it is evident that $\omega_0 \leq 0$. The generators of contractive semigroups are characterised by theorems of Lummer-Phillips and Hille-Yosida \cite{Engel_Nagel_2000}.

An operator semigroup $\Phi$ on an ordered vector space $V$ is called \emph{positive} if it consists of positive operators,
\begin{equation*}
  \Phi(t)v \geq 0 \; \text{ for all }\; t \geq 0, v \in V^+.
\end{equation*}
or equivalently
\begin{equation*}
  \langle\Phi(t)v,\rho\rangle \geq 0 \; \text{ for all } \; t\geq 0, v\in V^+, \rho\in (V^*)^+.
\end{equation*}

If the semigroup is positive and unital, it holds $\omega_0 = s(A) = 0$ and the resolvent $R(\lambda,A)$ is defined for all $\lambda \in \mathbb{C}$ such that $\mathrm{Re}(\lambda) > 0$.

The generators of positive semigroups are characterised using the notion of conditional (off-diagonal) positivity. The operator $A : \mathcal{D}(A) \rightarrow V$ on a function system $V$ is \emph{conditionally positive} if and only if
\begin{equation*}
  \langle v,\rho\rangle = 0 \text{ implies } \langle A v,\rho\rangle \geq 0 \quad \text{ for all } v\in \mathcal{D}(A)\cap V^+, \rho\in (V^*)^+.
\end{equation*}

\begin{theorem}[Arendt--Chernoff--Kato, \cite{Evans_Hanche-Olsen_1979, Arendt_Chernoff_Kato_1982, Koshkin_2013}] \label{theorem:Arendt_Chernoff_Kato}
  Let $V$ be a function system with unit $e$, let $\{\Phi(t)\}_{t\geq 0}$ be strongly  continuous semigroup with with generator $A$. Then the following conditions are equivalent:
  \begin{enumerate}
    \item $\Phi$ is a unital positive semigroup.
    \item $A$ is conditionally positive and $A[e] = 0$.
    \item The resolvent $R(\lambda,A)$ is positive for arbitrary large $\lambda > 0$ (equivalently, for all $\lambda > 0$), and $R(\lambda,A)[e] = \frac{1}{\lambda} e$.
  \end{enumerate}
\end{theorem}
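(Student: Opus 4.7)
The plan is to establish a circle of implications among the three conditions. The two implications from (1) are straightforward from the definitions: if $\Phi(t)e = e$, differentiating at zero gives $Ae = 0$; conditional positivity follows by observing that the difference quotient $t^{-1}\langle\Phi(t)v,\rho\rangle$ is nonnegative for all $t > 0$ whenever $\rho$ annihilates $v \in V^+\cap\mathcal{D}(A)$, so its limit $\langle Av,\rho\rangle$ is nonnegative as well. The implication (1) $\Rightarrow$ (3) comes directly from the Laplace transform representation \eqref{eq:resolvent_as_Laplace}: positivity of the integrand $e^{-\lambda t}\Phi(t)$ is preserved under the Bochner integral, and substituting $e$ gives $R(\lambda,A)[e] = \lambda^{-1}e$.

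The heart of the argument is (2) $\Rightarrow$ (3), which I would attack via a Phillips-style minimum principle. First, $Ae = 0$ immediately yields $R(\lambda,A)[e] = \lambda^{-1}e$ for every $\lambda$ in the resolvent set. To prove positivity of $u := R(\lambda,A)v$ for $v\in V^+$ and large $\lambda > 0$, suppose for contradiction that $c := \inf_{\rho\in S(V)}\langle u,\rho\rangle < 0$; by weak-$*$ compactness of $S(V)$ this infimum is attained at some state $\rho_0$. Setting $u' := u - ce$, we have $\langle u',\rho\rangle \geq 0$ for all $\rho\in S(V)$, so by Kadison's representation together with the Archimedean hypothesis we conclude $u' \in V^+$, and by construction $\langle u',\rho_0\rangle = 0$. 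Conditional positivity of $A$ at the pair $(u',\rho_0)$ then gives $\langle Au,\rho_0\rangle = \langle Au',\rho_0\rangle \geq 0$ (using $Ae = 0$), while applying $\rho_0$ to the resolvent identity $\lambda u - Au = v$ produces $\lambda c = \langle v,\rho_0\rangle + \langle Au,\rho_0\rangle \geq 0$, contradicting $\lambda > 0$ and $c < 0$.

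For (3) $\Rightarrow$ (1), I would use the Yosida approximation $A_n := n^2 R(n,A) - n\cdot\mathrm{id}$. Each $A_n$ is bounded and its exponential expands as $e^{tA_n} = e^{-nt}\sum_{k\geq 0}(n^2 t)^k R(n,A)^k/k!$, a norm-convergent series of positive operators, hence positive; the relation $R(n,A)[e] = n^{-1}e$ collapses the series at $e$ to give $e^{tA_n}[e] = e$. By the Hille--Yosida theorem $e^{tA_n} \to \Phi(t)$ strongly, and both positivity and unitality pass to the strong limit since $V^+$ is closed. Together with the already established (1) $\Rightarrow$ (3), this also upgrades the resolvent positivity from ``arbitrarily large $\lambda > 0$'' to all $\lambda > 0$, since once (1) is known one has $\omega_0 = 0$ and the Laplace representation applies throughout the right half-plane.

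The main obstacle is the minimum-principle step in (2) $\Rightarrow$ (3), specifically the deduction $u' \in V^+$ from nonnegativity on all states. This is precisely where the Archimedean property is indispensable: it guarantees that the positive cone $V^+$ is closed and hence equal to its bidual cone, so that pointwise positivity on $S(V)$ characterises membership in $V^+$. Without this, the contradiction at the end of the argument cannot be drawn, and indeed the theorem genuinely fails for non-Archimedean ordered units.
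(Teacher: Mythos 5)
The paper does not prove this statement: Theorem~\ref{theorem:Arendt_Chernoff_Kato} appears in the preliminaries as a quoted result, attributed to \cite{Evans_Hanche-Olsen_1979, Arendt_Chernoff_Kato_1982, Koshkin_2013}, so there is no in-paper argument to compare against. Your proposal is, in substance, the standard proof from those sources and I find it correct: (1)$\Rightarrow$(2) by differentiating the orbit, (1)$\Rightarrow$(3) by positivity of the Bochner integral in \eqref{eq:resolvent_as_Laplace}, (2)$\Rightarrow$(3) by the Phillips minimum principle, and (3)$\Rightarrow$(1) by positivity of the Yosida approximants $e^{tA_n}=e^{-nt}\sum_{k\ge 0}(n^2t)^k R(n,A)^k/k!$ together with closedness of $V^+$ under strong limits. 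Your diagnosis that the Archimedean property enters exactly at the step ``nonnegative on all states implies positive'' (the bipolar argument for the closed cone) is also the right one.

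Two small points deserve care. First, in the minimum-principle step you implicitly need $u=R(\lambda,A)v$ to be Hermitian, so that $c=\inf_{\rho\in S(V)}\langle u,\rho\rangle$ is a real number and $u-ce\in V_h$; condition (2) as literally stated does not guarantee that the resolvent preserves $V_h$, so one should either restrict the whole argument to the real subspace $V_h$ or add the (harmless, and standard in the cited references) assumption that $\Phi$, hence $R(\lambda,A)$, is $*$-preserving. Second, your route from ``positive for arbitrarily large $\lambda$'' to ``positive for all $\lambda>0$'' goes through (3)$\Rightarrow$(1)$\Rightarrow$(3); an alternative direct route is the Hilbert-identity power series
\begin{equation*}
  \lambda R(\lambda,A) = \sum_{k=0}^\infty \frac{\lambda}{\mu}\left(1-\frac{\lambda}{\mu}\right)^{k}\bigl(\mu R(\mu,A)\bigr)^{k+1},
\end{equation*}
which expresses $\lambda R(\lambda,A)$ as a convex combination of powers of $\mu R(\mu,A)$ for $0<\lambda\le\mu$. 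This is exactly the device the paper exploits (in its second Observation and in the proof of Theorem~\ref{theorem:extension_of_semigroups}), so it is worth noting that the same mechanism closes this gap here.
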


In case $V$ is an operator system we  will require that $\Phi$ is a unital completely positive semigroup. The generator $A$ is then called \emph{conditionally completely positive} if all the amplifications $A^{(n)}$ are conditionally positive.

Unital completely positive semigroups are completely  contractive, for them it holds that $s(A)=\omega_0=0$, therefore the resolvent is defined for all $\lambda$ such that $\mathrm{Re}(\lambda)>0$ by formula \eqref{eq:resolvent_as_Laplace}.

\subsection{Injectivity} \label{subsec:injectivity}

An operator system $Z$ is called \emph{injective} if given a unital completely positive map $\phi : V \rightarrow Z$ and an embedding (complete isometry) $i : V \hookrightarrow W$ there is an completely positive map $\psi : W \rightarrow Z$ which is an extension of $\phi$, i.e. such that the diagram commutes
\begin{equation*}
  \begin{tikzcd}
  V \arrow[d, "i"', hook] \arrow[r, "\phi"] & Z \\
  W \arrow[ru, "\psi"']                     &
  \end{tikzcd}
\end{equation*}
It turns out the injective operator systems are also injective as operator  spaces \cite[Proposition 15.1]{Paulsen_2003}, i.e. every completely bounded map $\phi : V \rightarrow Z$ can be extended to
$\psi : W \rightarrow Z$ so that $\lVert\psi\rVert_{cb} = \lVert\phi\rVert_{cb}$.

An important theorem of Arveson states that the algebra $\mathcal{B}(\mathcal{H})$ is injective.
\begin{theorem}[Arveson, \cite{Arveson_1969}]
  Given Hilbert spaces $\mathcal{H}$ and $\mathcal{K}$, and concrete  operator systems $W \subseteq V$ on $\mathcal{H}$, any unital completely positive map $\phi : W \rightarrow \mathcal{B}(\mathcal{H})$ may be extended to a map $\psi : V \rightarrow \mathcal{B}(\mathcal{H})$.
\end{theorem}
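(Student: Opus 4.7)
The plan is to reduce the problem to a Hahn--Banach extension of a state on an AOU space (Krein's theorem) via a matricial Choi--Jamiolkowski correspondence, first for a finite-dimensional target and then in general by compactness.

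I would first handle the case $\mathcal{K} \cong \mathbb{C}^n$, so that the target is $\mathbb{M}_n$. Unital completely positive maps $\phi : W \rightarrow \mathbb{M}_n$ are in bijection with states $s$ on the AOU space $\mathbb{M}_n(W)$ satisfying the diagonal normalisation $s(e \otimes E_{ij}) = \tfrac{1}{n}\delta_{ij}$, via
$$
  s_\phi\bigl((w_{ij})\bigr) = \tfrac{1}{n}\sum_{i,j=1}^n \phi[w_{ij}]_{ij}, \qquad \phi_s[w]_{ij} = n\, s(w \otimes E_{ij}).
$$
Positivity of the scalar functional $s_\phi$ on $\mathbb{M}_n(W)^+$ captures the complete positivity of $\phi$ thanks to Smith's lemma. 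Since $\mathbb{M}_n(W)$ is a unital $*$-subspace of $\mathbb{M}_n(V)$ sharing the matrix order unit $I_n \otimes e$, Krein's classical extension theorem for positive functionals on function systems produces a state $\tilde{s}$ on $\mathbb{M}_n(V)$ restricting to $s_\phi$. The diagonal normalisation is automatically inherited (since $e \otimes E_{ij} \in \mathbb{M}_n(W)$), so reversing the correspondence yields a UCP map $\psi = \phi_{\tilde{s}} : V \rightarrow \mathbb{M}_n$ extending $\phi$.

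For general $\mathcal{K}$, I would index by the directed set of finite-dimensional subspaces $\mathcal{K}_\alpha \subseteq \mathcal{K}$ with orthogonal projections $P_\alpha$. The compressions $w \mapsto P_\alpha \phi[w] P_\alpha : W \rightarrow \mathcal{B}(\mathcal{K}_\alpha)$ are UCP, and the previous step extends each to a UCP map $\psi_\alpha : V \rightarrow \mathcal{B}(\mathcal{K}_\alpha) \hookrightarrow \mathcal{B}(\mathcal{K})$. The net $\{\psi_\alpha\}$ lies in the completely bounded unit ball of $\mathcal{CB}(V, \mathcal{B}(\mathcal{K}))$, which is compact in the point-weak-$*$ topology by Banach--Alaoglu applied coefficientwise. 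Any cluster point $\psi$ is UCP, because positivity at every matrix level passes through limits of matrix coefficients, and $\psi$ extends $\phi$ since $\langle \psi_\alpha[w]\xi,\eta\rangle = \langle \phi[w] P_\alpha \xi, P_\alpha \eta\rangle$ converges to $\langle \phi[w]\xi, \eta\rangle$ as $P_\alpha \to I$ strongly.

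The hard part I anticipate is the final compactness step: one must verify that the cluster point is a genuine UCP extension of $\phi$, not merely a positive map or an approximate extension. This requires choosing the right topology (point-weak-$*$) and using the fact that matrix coefficients are continuous linear functionals, so that positivity of $\psi^{(n)}[(v_{ij})]$ is stable under the limit.
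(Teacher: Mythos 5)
The paper offers no proof of this statement: it is quoted as a classical theorem with a citation to Arveson's 1969 paper (and, incidentally, the paper's wording contains a typo --- the map should land in $\mathcal{B}(\mathcal{K})$, as your proof correctly assumes, otherwise the auxiliary space $\mathcal{K}$ plays no role). So there is no in-paper argument to compare against; what you have written is the standard modern proof, essentially the one in Paulsen's book (Chapters 6--7): the correspondence $\phi\leftrightarrow s_\phi$ between unital completely positive maps $W\rightarrow\mathbb{M}_n$ and suitably normalised states on $\mathbb{M}_n(W)$, Krein's extension theorem applied across the inclusion $\mathbb{M}_n(W)\subseteq\mathbb{M}_n(V)$ of AOU spaces sharing the order unit $e\otimes I_n$, and then Arveson's original device of compressing to finite-dimensional subspaces and taking a point-weak-$*$ (BW-topology) cluster point. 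The architecture is sound and I see no genuine gap, but two steps deserve more care than your sketch gives them. First, the implication ``$\tilde s$ positive on $\mathbb{M}_n(V)$ implies $\phi_{\tilde s}$ completely positive'' is not a bare appeal to Smith's lemma: Smith's lemma reduces complete positivity to $n$-positivity for maps into $\mathbb{M}_n$, but one still must show that positivity of the single scalar functional $\tilde s$ forces positivity of $\phi_{\tilde s}^{(n)}$, which requires testing $\tilde s$ on elements of the form $\alpha^* P\alpha$ with $P\in\mathbb{M}_k(V)^+$ and $\alpha$ a scalar matrix, using the compatibility axiom of the matrix cones. Second, after padding, the maps $\psi_\alpha:V\rightarrow\mathcal{B}(\mathcal{K}_\alpha)\hookrightarrow\mathcal{B}(\mathcal{K})$ are completely positive but no longer unital (they send $e$ to $P_\alpha$), so unitality of the cluster point must be recovered separately from the weak-$*$ convergence $P_\alpha\rightarrow I$ on the bounded net; this is true but should be stated, since otherwise the cluster point is only known to be completely positive and contractive.
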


The concrete operator system $V \subseteq \mathcal{B}(\mathcal{H})$ is injective if and only if there is a unital completely positive projection $p : \mathcal{B}(\mathcal{H}) \twoheadrightarrow \mathcal{B}(\mathcal{H})$ with $\mathrm{Im}(p) = V$, which is the extension of identity map $\mathrm{id}_V$. By the result of Choi and Effros \cite[Theorem 15.2]{Paulsen_2003}, each injective operator system is completely order isomorphic to a $C^*$-algebra (in fact a motonote complete $C^*$-algebra \cite{Saito_Wright_2015}) with multiplication induced by the projection $p$. Therefore it holds that the injective matricial systems are exactly the finite-dimensional $C^*$-algebras.

The \emph{extension} of an operator system $V$ is an operator system $W$ together with a unital completely positive embedding $i : V \hookrightarrow W$. The extension is \emph{injective} if $W$ is an injective operator system. The extension is \emph{essential} in case the map $\psi : W \rightarrow Z$ is an embedding if and only if $\psi \circ i : V \rightarrow Z$ is. The extension is called \emph{rigid} if for any $\psi : W \rightarrow W$ the condition $\psi \circ i = i$ implies $\psi = \mathrm{id}_W$. That means, the unique extension of $\mathrm{id}_V$ is $\mathrm{id}_W$.

The \emph{injective envelope} is a minimal injective  extension of $V$. That means, if $W$ is an injective envelope of $V$ with embedding $i : V\hookrightarrow W$, then the only injective operator system $Z$ such that $i[V]\subseteq Z\subseteq W$ is $Z = W$. The construction of injective envelope was proposed by Hamana \cite{Hamana_1979}. It turns out that injective envelope is a rigid and essential extension, and vice versa, if $W$ is an injective rigid extension of $V$, then it is an injective envelope.

We will be interested in extension properties of semigroups.  If $i : V \hookrightarrow W$ is an embedding, and $\Phi$ is a semigroup on $V$, then a semigroup $\Psi$ on $W$ is an extension if it holds that $\Psi(t) \circ i = i \circ \Phi(t)$ for all $t \geq 0$. If there also exists a quotient map $p : W \twoheadrightarrow V$, such that $p\circ i = \mathrm{id}_V$ then $\Psi$ is called \emph{co-extension} if $p\circ\Psi(t) = \Phi(t)\circ p$ and \emph{dilation} if $\Phi(t) = p\circ\Psi(t)\circ i$. Note that if $\Psi$ is a dilation, then it also is an extension and co-extension of $\Phi$. For convenience of the reader we present Figure~\ref{fig:extensions}, which illustrates differences between the concepts of extensions, co-extensions and dilations.
\begin{figure}[h]
  \centering
  \begin{tabular}[b]{c}
    \begin{tikzcd}
    V \arrow[r, "\Phi(t)"] \arrow[d, "i"', hook] & V \arrow[d, "i", hook] \\
    W \arrow[r, "\Psi(t)"']                      & W
    \end{tikzcd}
    \\
    extension
  \end{tabular}
  \qquad
  \begin{tabular}[b]{c}
    \begin{tikzcd}
    V \arrow[r, "\Phi(t)"]                            & V                            \\
    W \arrow[r, "\Psi(t)"'] \arrow[u, "p", two heads] & W \arrow[u, "p"', two heads]
    \end{tikzcd}
    \\
    co-extension
  \end{tabular}
  \qquad
  \begin{tabular}[b]{c}
    \begin{tikzcd}
    V \arrow[r, "\Phi(t)"] \arrow[d, "i"', hook] & V                            \\
    W \arrow[r, "\Psi(t)"']                      & W \arrow[u, "p"', two heads]
    \end{tikzcd}
    \\
    dilation
  \end{tabular}
  \caption{The commutative diagrams of extensions, co-extensions and dilations. Here $i : V \hookrightarrow W$ is a unital completely positive embedding and $p : W \twoheadrightarrow V$ is a unital completely positive quotient map. }
  \label{fig:extensions}
\end{figure}
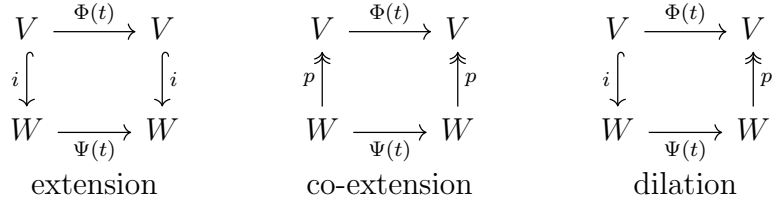

\section{Main result} \label{sec:main_result}

Given some semigroup $\Phi$ on operator system $V$, we want to somehow extend it a some semigroup $\Psi$ on $C^*$-algebra. In this section we prove that it is always possible to extend the continuous semigroups in such a way in case of matricial systems. Then, we also show that the extension is unique for invertible semigroups.

\subsection{Extension of semigroups}

Let us first note that this problem is trivial  for discrete-time semigroups.
\begin{proposition} \label{remark:discrete_case}
  Let $V$ be an operator system, $W$ it's injective  extension, and $i : V \hookrightarrow W$ an embedding. Let $\Phi : \mathbb{Z}_+ \rightarrow \mathrm{UCP}(V)$ be a discrete-time operator semigroup on $V$. Then it can   be extended to a semigroup $\Psi : \mathbb{Z}_+ \rightarrow \mathrm{UCP}(W)$.
\end{proposition}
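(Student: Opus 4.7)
The plan is to reduce the problem to extending the single UCP map $\Phi(1)$, since any discrete-time operator semigroup on $V$ is determined by its generator in the sense that $\Phi(n) = \Phi(1)^n$ for all $n \in \mathbb{Z}_+$.

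First I would consider the composition $i \circ \Phi(1) : V \rightarrow W$, which is a unital completely positive map because both $i$ and $\Phi(1)$ are UCP. Since $i : V \hookrightarrow W$ is a unital complete order embedding and $W$ is an injective operator system, the defining property of injectivity (stated in Subsection~\ref{subsec:injectivity}) applied to $i \circ \Phi(1) : V \rightarrow W$ and $i : V \hookrightarrow W$ yields a UCP extension $\Psi(1) : W \rightarrow W$ satisfying
\begin{equation*}
  \Psi(1) \circ i = i \circ \Phi(1).
\end{equation*}

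Next, I would define $\Psi(n) \coloneqq \Psi(1)^n$ for every $n \in \mathbb{Z}_+$, with $\Psi(0) = \mathrm{id}_W$. This is automatically a semigroup homomorphism $\mathbb{Z}_+ \rightarrow \mathrm{UCP}(W)$ because the composition of UCP maps is UCP. The extension property $\Psi(n) \circ i = i \circ \Phi(n)$ then follows by an easy induction: assuming $\Psi(n-1) \circ i = i \circ \Phi(n-1)$, one computes
\begin{equation*}
  \Psi(n) \circ i = \Psi(1) \circ \Psi(n-1) \circ i = \Psi(1) \circ i \circ \Phi(n-1) = i \circ \Phi(1) \circ \Phi(n-1) = i \circ \Phi(n).
\end{equation*}

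There is essentially no obstacle here: the entire content of the proposition lies in invoking injectivity of $W$ to extend a single UCP map, after which iteration is free. This is precisely the feature that fails in the continuous-time case, where one cannot extend each $\Phi(t)$ independently and hope the extensions fit together into a semigroup, motivating the more delicate arguments of the rest of the section.
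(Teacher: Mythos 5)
Your proof is correct and follows exactly the paper's approach: extend $\Phi(1)$ via injectivity of $W$ and iterate. The paper's own proof is just a terser version of the same argument (it leaves the induction implicit).
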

\begin{proof}
  Discrete-time one-parameter semigroups are determined by their element $\Phi(1)$, because $\Phi(n) = \Phi(1)^n$. Using the injectivity of $W$, extend $\Phi(1) : V \rightarrow V$ to the operator $\Psi(1) : W \rightarrow W$.
\end{proof}

Let us state the result for continuous-time semigroups over matricial systems.
\begin{theorem} \label{theorem:extension_of_semigroups}
  Let $V$ be a matricial system and let $W$ be it's injective envelope with embedding $i : V \hookrightarrow W$. Let $\Phi : \mathbb{R}_+ \rightarrow \mathrm{UCP}(V)$ be a continuous unital completely positive one-parameter semigroup on $V$. Then it can be extended to a continuous unital completely positive semigroup $\Psi : \mathbb{R}_+ \rightarrow \mathrm{UCP}(W)$ on $W$.
\end{theorem}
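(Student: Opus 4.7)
The plan is to reduce the extension problem to one about the generator and carry it out via a Lindblad/GKSL-style decomposition performed inside the ambient $C^*$-algebra $W$. Since $V$ is finite-dimensional, any continuous operator semigroup on $V$ is automatically uniformly continuous, so $\Phi(t) = e^{tA}$ for some bounded generator $A : V \to V$; by Theorem~\ref{theorem:Arendt_Chernoff_Kato} the operator $A$ is conditionally completely positive with $A[e_V] = 0$. Conversely, any bounded operator $B : W \to W$ that is conditionally completely positive and annihilates $e_W$ exponentiates to a uniformly continuous UCP semigroup $e^{tB}$ on the finite-dimensional $C^*$-algebra $W$ (the standard GKSL theorem, applicable because $W$ is a $C^*$-algebra). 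Thus the whole problem reduces to producing such a $B$ whose restriction to $i[V]$ coincides with $A$.

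The core step is to represent $A$ in Lindblad form with the ``noise'' pieces taken in $W$: produce a completely positive map $\Psi : V \to W$ and an element $K \in W$ with
\begin{equation*}
  A[v] = \Psi[v] - K^* v - v K, \qquad \Psi[e_V] = K + K^*,
\end{equation*}
the products $K^* v$ and $v K$ being formed using the multiplication of $W$. Such a decomposition is the analogue, for CCP maps from an operator system into an ambient $C^*$-algebra, of the Christensen--Evans / Evans--Lewis theorem for $C^*$-algebras; in the present finite-dimensional matricial setting it can be obtained by a Stinespring-type dilation applied to the CCP map $A : V \to W$.

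Once this is in hand, I would invoke injectivity of $W$ (it is the injective envelope of $V$, hence injective) to extend $\Psi : V \to W$ to a completely positive map $\tilde\Psi : W \to W$ with $\tilde\Psi \circ i = \Psi$, via Arveson's theorem. Then set
\begin{equation*}
  B[w] := \tilde\Psi[w] - K^* w - w K, \qquad w \in W.
\end{equation*}
The operator $B$ is in GKSL form on $W$, so $\Psi(t) := e^{tB}$ is a uniformly continuous UCP semigroup on $W$. Unitality is automatic because $B[e_W] = \tilde\Psi[e_W] - K - K^* = \Psi[e_V] - K - K^* = 0$, using that the embedding $i$ is unital. For $v \in V$ one computes $B[v] = \tilde\Psi[v] - K^* v - v K = \Psi[v] - K^* v - v K = A[v] \in V$, so $V$ is $B$-invariant and $B|_V = A$; consequently $\Psi(t) \circ i = i \circ \Phi(t)$ for every $t \geq 0$, as required.

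The main obstacle I expect is the Lindblad decomposition: one must produce the CP map $\Psi$ and the ``Hamiltonian'' $K$ so that all multiplications happen inside $W$ and the identity holds exactly on $V$. After that, the subsequent moves---Arveson's extension of $\Psi$ to $\tilde\Psi$ and the GKSL exponentiation on $W$---are essentially automatic, and use the finite-dimensional $C^*$-algebra structure of $W$ in an essential way.
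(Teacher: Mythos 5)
Your reduction is sound up to the decomposition step, but that step is a genuine gap, and it is not a technical one: the existence of a completely positive map $\Psi : V \to W$ and an element $K \in W$ with $A[v] = \Psi[v] - K^*v - vK$ is essentially \emph{equivalent} to the theorem you are trying to prove (given such a decomposition, your remaining steps do work; conversely, any UCP extension $e^{tB}$ on the finite-dimensional $C^*$-algebra $W$ yields such a decomposition by applying GKSL to $B$ and restricting). The tools you cite do not deliver it. The Christensen--Evans / Evans--Lewis theorem is a statement about conditionally completely positive maps \emph{on $C^*$-algebras}: its proof constructs $\Psi$ from the bilinear expression $A[a^*b] - A[a^*]b - a^*A[b] + a^*A[e]b$, which requires multiplying two elements of the \emph{domain}. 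On an operator system $V$ there is no such product, so the cocycle/dilation argument has nothing to act on. Stinespring likewise does not apply directly, since $A$ is only conditionally completely positive, not completely positive, and the standard device for converting a CCP map into a CP one (subtracting the ``diagonal'' part) again presupposes an algebra structure on the domain. So the ``main obstacle'' you flag is not a deferred lemma; it is the theorem.

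The paper's proof sidesteps this by never working with the generator directly. It works with the resolvents $\lambda R(\lambda,A)$, which are genuinely unital completely positive and hence extendable to $W$ by injectivity (Arveson); the actual difficulty, which your proposal does not encounter because it is hidden inside the missing decomposition, is to choose the extensions $F(\lambda) \in \mathrm{UCP}(W)$ \emph{coherently} so that they satisfy Hilbert's identity and thus form the resolvent of a single generator $G$ on $W$. This is done by a compactness argument on a decreasing family of nonempty compact sets $\mathcal{G}_\omega$ of coherent partial resolvent families, followed by a rigidity argument (using that $W$ is the injective envelope, not just injective) to show $F(\lambda) \to \mathrm{id}_W$ as $\lambda \to \infty$. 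If you want to rescue your approach, you would need to first prove the operator-system Christensen--Evans statement for matricial systems --- which, to my knowledge, is open in the generality you need and would itself most naturally be derived as a \emph{corollary} of the extension theorem rather than an input to it.
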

Before proving the theorem, let us give some observations.
\begin{observation}
  Since we consider the matricial (i.e. finite-dimensional) case, the space $\mathrm{UCP}(V,W)$ of unital completely positive maps is compact in the norm topology $\lVert\cdot\rVert_{cb}$. Also, the space
  \begin{equation*}
    \left\{\psi\in\mathrm{UCP}(W) \, \middle| \, \psi\circ i = i\circ \phi\right\}
  \end{equation*}
  of the extensions of a given $\phi\in\mathrm{UCP}(V)$ is compact.
\end{observation}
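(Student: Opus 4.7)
The plan is to exhibit $\mathrm{UCP}(V,W)$ as a bounded, closed subset of the finite-dimensional vector space $\mathcal{L}(V,W)$ of linear maps, and then obtain the second assertion as a closed subset of the first compactum.

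First I would note that since $V$ and $W$ are matricial, both are finite-dimensional, so $\mathcal{L}(V,W)$ is finite-dimensional and all linear topologies on it agree; in particular the norm $\lVert\cdot\rVert_{cb}$ restricted to $\mathrm{UCP}(V,W)$ is equivalent to the operator norm and to any other convenient norm, and Heine--Borel applies. For boundedness, I would invoke the fact recalled in Subsection~\ref{subsec:operator_systems} that every unital completely positive map is completely contractive with $\lVert\phi\rVert_{cb} = 1$, hence $\mathrm{UCP}(V,W)$ sits inside the closed unit ball of $\mathcal{CB}(V,W)$.

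For closedness, suppose $\phi_n \in \mathrm{UCP}(V,W)$ with $\phi_n \to \phi$ in $\lVert\cdot\rVert_{cb}$. Unitality passes to the limit: $\phi(e_V) = \lim_n \phi_n(e_V) = e_W$. For complete positivity, fix $k$ and $v \in \mathbb{M}_k(V)^+$; then $\phi_n^{(k)}(v) \in \mathbb{M}_k(W)^+$ for each $n$, and $\phi_n^{(k)}(v) \to \phi^{(k)}(v)$ because the amplification is continuous in $\lVert\cdot\rVert_{cb}$. Since $\mathbb{M}_k(W)^+$ is closed by one of the defining axioms of an ordered $*$-vector space (Subsection~\ref{subsec:ordered_vector_spaces}), the limit lies in the cone, so $\phi^{(k)}$ is positive for every $k$. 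Thus $\phi \in \mathrm{UCP}(V,W)$ and $\mathrm{UCP}(V,W)$ is compact.

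For the second assertion, I would apply the previous paragraph to $\mathrm{UCP}(W) = \mathrm{UCP}(W,W)$, which is compact. The subset
\begin{equation*}
  E_\phi = \left\{ \psi \in \mathrm{UCP}(W) \, \middle| \, \psi \circ i = i \circ \phi \right\}
\end{equation*}
is cut out by the continuous affine equation $\psi \mapsto \psi\circ i - i\circ\phi$ from $\mathcal{L}(W)$ to $\mathcal{L}(V,W)$, so $E_\phi$ is closed in $\mathrm{UCP}(W)$, hence compact. No real obstacle is expected here; the only point worth being careful about is that complete positivity is a condition at every matrix level, but each level is separately preserved under limits, and in the matricial setting Smith's lemma even reduces checking complete positivity to a single finite level, so no uniformity issue arises.
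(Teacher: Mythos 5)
Your argument is correct and is exactly the standard reasoning the paper leaves implicit (the Observation is stated without proof): Heine--Borel in the finite-dimensional space $\mathcal{L}(V,W)$, boundedness from $\lVert\phi\rVert_{cb}=1$ for unital completely positive maps, closedness from the closedness of the matrix cones, and the extension set as the preimage of a point under a continuous affine map. Nothing is missing.
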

\begin{observation}
  Let $G$ be a generator of some unital completely positive semigroup on some operator system $W$. Then for all $\lambda > 0$ it holds that $\lambda R(\lambda,G) \in \mathrm{UCP}(W)$ is unital and completely positive. If $\lambda,\mu > 0$ are two sufficiently close numbers, then by Hilbert's identity \eqref{eq:Hilberts_identity} it holds
  \begin{equation*}
    \lambda R(\lambda,G) = \sum_{k=0}^\infty \frac{\lambda}{\mu} \left(1-\frac{\lambda}{\mu}\right)^k (\mu R(\mu,G))^{k+1}.
  \end{equation*}
  Let $\beta$ be a number such that $0<\beta\leq1$. Let us examine the properties of a nonlinear map $H_\beta$
  \begin{equation*}
    H_\beta : \mathrm{UCP}(W) \rightarrow \mathrm{UCP}(W), \quad \phi \mapsto \sum_{k=0}^\infty \beta (1-\beta)^k \phi^{k+1}.
  \end{equation*}
  First of all note that the image of any unital map is unital,
  \begin{equation*}
    H_\beta[\phi][e] = \sum_{k=0}^\infty \beta (1-\beta)^k \phi^{k+1}[e] = e.
  \end{equation*}
  and the image of any positive map is positive as the sum of positive maps.

  Next, the map is continuous: let $\{\phi_\alpha\}_\alpha$ be a net of unital completely positive maps converging to $\phi\in\mathrm{UCP}(W)$, i.e. $\lim_\alpha \lVert\phi-\phi_\alpha\rVert_{cb} = 0$. Then
  \begin{equation*}
    \lVert \phi^{k+1} - \phi_\alpha^{k+1} \rVert_{cb} = \lVert \sum_{l=0}^{k} \phi^{k-l} (\phi-\phi_\alpha) \phi_\alpha^l \rVert_{cb} \leq (k+1)\lVert\phi-\phi_\alpha\rVert_{cb}
  \end{equation*}
  and
  \begin{equation*}
  \begin{aligned}
    \lVert H_\beta[\phi] - H_\beta[\phi_\alpha] \rVert_{cb}
    &\leq \sum_{k=0}^\infty \beta (1-\beta)^k \lVert\phi^{k+1}-\phi_\alpha^{k+1}\rVert_{cb} \\
    &\leq \sum_{k=0}^\infty \beta (1-\beta)^k (k+1) \lVert\phi-\phi_\alpha\rVert_{cb} \\
    &= \frac{1}{\beta} \lVert\phi-\phi_\alpha\rVert_{cb},
  \end{aligned}
  \end{equation*}
  so $\lim_\alpha H_\beta[\phi_\alpha] = H_\beta[\phi]$ uniformly on $\beta$ inside some interval $[\varepsilon,1]$, where $\varepsilon>0$. The map $H_\beta$ between compact Hausdorff spaces is continuous and injective,  therefore the image $\mathrm{Im}(H_\beta)$ is a compact subspace of $\mathrm{UCP}(W)$.

  Also, if $\beta_1, \beta_2 \in (0,1]$, then  $H_{\beta_1\cdot\beta_2} = H_{\beta_1}\circ H_{\beta_2}$.
\end{observation}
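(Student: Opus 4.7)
The plan is to verify the identity $H_{\beta_1\cdot\beta_2} = H_{\beta_1}\circ H_{\beta_2}$ by a direct power-series computation that reduces it to the scalar Möbius identity $G_{\beta_1}\circ G_{\beta_2} = G_{\beta_1\beta_2}$ for the rational function $G_\beta(x) = \beta x/(1-(1-\beta)x) = \sum_{k=0}^\infty \beta(1-\beta)^k x^{k+1}$. The key enabling fact is that every $\phi\in\mathrm{UCP}(W)$ satisfies $\lVert\phi^k\rVert_{cb}\le 1$ for all $k\ge 0$, so the series defining $H_\beta(\phi)$, as well as every multi-series appearing below, converges absolutely in $\mathcal{CB}(W)$; this makes every interchange of summation and Cauchy product legitimate.

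First I expand $(H_{\beta_2}(\phi))^{m+1}$ as a product of $m+1$ copies of $\sum_{k\ge 0}\beta_2(1-\beta_2)^k \phi^{k+1}$ and regroup by the total exponent $N = (m+1)+(k_1+\cdots+k_{m+1})$ on $\phi$. Since the number of nonnegative integer solutions of $k_1+\cdots+k_{m+1}=N-m-1$ equals $\binom{N-1}{m}$, this yields
\begin{equation*}
  \bigl(H_{\beta_2}(\phi)\bigr)^{m+1} = \sum_{N=m+1}^{\infty} \binom{N-1}{m}\,\beta_2^{m+1}(1-\beta_2)^{N-m-1}\,\phi^N.
\end{equation*}

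Substituting into the defining series of $H_{\beta_1}$ and swapping the two summations, the coefficient of $\phi^N$ in $H_{\beta_1}(H_{\beta_2}(\phi))$ becomes
\begin{equation*}
  \beta_1\beta_2\sum_{m=0}^{N-1}\binom{N-1}{m}\bigl((1-\beta_1)\beta_2\bigr)^m (1-\beta_2)^{N-1-m} = \beta_1\beta_2\bigl((1-\beta_1)\beta_2+(1-\beta_2)\bigr)^{N-1}
\end{equation*}
by the binomial theorem. Since $(1-\beta_1)\beta_2+(1-\beta_2) = 1-\beta_1\beta_2$, this coefficient simplifies to $\beta_1\beta_2(1-\beta_1\beta_2)^{N-1}$, which is precisely the $N$-th coefficient of $H_{\beta_1\beta_2}(\phi)$. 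Hence $H_{\beta_1}(H_{\beta_2}(\phi)) = H_{\beta_1\beta_2}(\phi)$.

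The only delicate step is the interchange of summation when expanding $(H_{\beta_2}(\phi))^{m+1}$; it is handled uniformly by the contractivity $\lVert\phi\rVert_{cb}\le 1$, which forces each series in sight to be absolutely summable in operator norm. As a more conceptual alternative one can observe that the spectrum $\sigma(\phi)$ lies in the closed unit disk and that $G_\beta$ is holomorphic on a neighbourhood of this disk for every $\beta\in(0,1]$, so $H_\beta(\phi)$ coincides with $G_\beta(\phi)$ in the Riesz--Dunford holomorphic functional calculus; the claim then follows immediately from the composition law $G_{\beta_1}\circ G_{\beta_2} = G_{\beta_1\beta_2}$ for the underlying Möbius transformations.
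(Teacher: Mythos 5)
Your verification of the composition law $H_{\beta_1\cdot\beta_2}=H_{\beta_1}\circ H_{\beta_2}$ is correct: the stars-and-bars count $\binom{N-1}{m}$ for the expansion of $\bigl(H_{\beta_2}(\phi)\bigr)^{m+1}$, the binomial-theorem collapse of the coefficient of $\phi^N$ to $\beta_1\beta_2(1-\beta_1\beta_2)^{N-1}$ via $(1-\beta_1)\beta_2+(1-\beta_2)=1-\beta_1\beta_2$, and the justification of all rearrangements by complete contractivity $\lVert\phi^k\rVert_{cb}\leq 1$ are all sound. The Riesz--Dunford alternative also works, since $G_{\beta_2}$ maps the closed unit disk into itself (its coefficients are nonnegative and sum to $1$), so $\sigma(H_{\beta_2}(\phi))$ stays inside the domain of holomorphy of $G_{\beta_1}$ and the composition rule of the holomorphic calculus applies. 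On this point you give strictly more than the paper, which states the identity $H_{\beta_1\cdot\beta_2}=H_{\beta_1}\circ H_{\beta_2}$ with no argument at all.

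The gap is coverage: the observation makes several further assertions, all of which the paper does prove and none of which your proposal addresses. Specifically: (i) that $\lambda R(\lambda,G)$ is unital and completely positive, and that Hilbert's identity \eqref{eq:Hilberts_identity} yields the expansion of $\lambda R(\lambda,G)$ as the series in powers of $\mu R(\mu,G)$, i.e.\ $\lambda R(\lambda,G)=H_{\lambda/\mu}\bigl[\mu R(\mu,G)\bigr]$; (ii) that $H_\beta$ maps $\mathrm{UCP}(W)$ into itself --- unitality of $H_\beta[\phi]$ by evaluating at $e$ and summing the geometric series, complete positivity because the series is a norm-convergent sum of completely positive maps and the cone of completely positive maps is closed; (iii) the quantitative continuity $\lVert H_\beta[\phi]-H_\beta[\phi_\alpha]\rVert_{cb}\leq\frac{1}{\beta}\lVert\phi-\phi_\alpha\rVert_{cb}$, which the paper obtains from the telescoping bound $\lVert\phi^{k+1}-\phi_\alpha^{k+1}\rVert_{cb}\leq(k+1)\lVert\phi-\phi_\alpha\rVert_{cb}$; and (iv) the compactness of $\mathrm{Im}(H_\beta)$ as the continuous image of the compact set $\mathrm{UCP}(W)$. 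These are precisely the facts the proof of Theorem~\ref{theorem:extension_of_semigroups} relies on (the resolvent expansion and the Lipschitz constant $1/\beta$ are what make the spaces $\mathcal{G}_\omega$ nonempty, compact and nested), so a proof of the observation that establishes only the final composition identity is incomplete.
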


Now, we turn to the proof of Theorem~\ref{theorem:extension_of_semigroups}.
\begin{proof}
  Let $A$ be a generator of a semigroup $\Phi : \mathbb{R}_+ \rightarrow \mathrm{UCP}(V)$. Let us define for arbitrary $\omega > 0$ the set $\mathcal{G}_\omega$ of functions $(0,\omega] \rightarrow \mathrm{UCP}(W)$ which extend $\lambda \mapsto \lambda R(\lambda,A)$ and satisfy an analogue of Hilbert's identity \eqref{eq:Hilberts_identity}
  \begin{equation*}
    \mathcal{G}_\omega = \left\{ F : (0,\omega] \rightarrow \mathrm{UCP}(W) \, \middle| \;
    \begin{matrix}
    F(\lambda)\circ i = i \circ \lambda R(\lambda,A), \\
    \frac{\lambda F(\mu) - \mu F(\lambda)}{\lambda-\mu} = F(\lambda)F(\mu), \\
    \text{ for all } \lambda,\mu \in (0,\omega]. \\
    \end{matrix}
  \right\}
  \end{equation*}
  Such functions may be generated in the following way: let $F(\omega) \in \mathrm{UCP}(W)$ be some extension of $\omega R(\omega,A)$, which exists since $W$ is injective. Then the function $F$ is defined for all $\lambda\in (0,\omega]$ as
  \begin{equation*}
    F(\lambda) = H_{\lambda/\omega}[F(\omega)].
  \end{equation*}
  Therefore the set $\mathcal{G}_\omega$ is not empty. On the other hand, given $F\in\mathcal{G}_\omega$, the map $F(\omega)$ is unital
  completely positive and extends $\omega R(\omega,\lambda)$. Let us endow $\mathcal{G}_\omega$ with topology, induced by this bijection
  \begin{equation*}
    \mathcal{G}_\omega \cong \left\{ \psi\in\mathrm{UCP}(W) \, \middle| \, \psi\circ i = i\circ\omega R(\omega,A) \right\},
  \end{equation*}
  then $\mathcal{G}_\omega$ is a Hausdorff compact space. This topology may alternatively be described as a topology of uniform convergence on each closed interval $[\varepsilon,\omega]$ for $\varepsilon>0$.

  If $\omega_1 < \omega_2$, then the space of functions $\mathcal{G}_{\omega_2}$ is embedded into the space $\mathcal{G}_{\omega_1}$ via the map $H_{\omega_1/\omega_2}$.
  \begin{equation*}
    \text{If } \omega_1 < \omega_2, \text{ then } \mathcal{G}_{\omega_1} \supseteq \mathcal{G}_{\omega_2}.
  \end{equation*}
  So, the family $\{\mathcal{G}_\omega\}_{\omega>0}$ is a decreasing net of nonempty compact Hausdorff spaces, therefore it has a nonempty intersection. Let us denote
  \begin{equation*}
    \mathcal{G}_\infty = \bigcap_{\omega>0} \mathcal{G}_\omega.
  \end{equation*}

  Let $F\in \mathcal{G}_\infty$ be some function from $\mathcal{G}_\omega$. This function is defined for all $\lambda > 0$, so that $F(\lambda)\in\mathrm{UCP}(W)$, it satisfies an analogue of Hilbert's identity and is an extension of $\lambda \mapsto \lambda R(\lambda,A)$. Let us show that $F(\lambda) \rightarrow \mathrm{id}_W$ when $\lambda \rightarrow \infty$. Let $\{\lambda_\alpha\}_\alpha$ be an increasing net of positive numbers, such that the net converges to some $J = \lim_\alpha F(\lambda_\alpha)$. Such nets exist, since $\mathrm{UCP}(W)$ is compact. Then it holds
  \begin{equation*}
  \begin{aligned}
    J\circ i
    &= \left\{\lim_\alpha F(\lambda_\alpha)\right\}\circ i = \lim_\alpha \left\{F(\lambda_\alpha)\circ i\right\} \\
    &= \lim_\alpha \left\{i\circ \lambda_\alpha R(\lambda_\alpha,A)\right\} = i\circ\left\{\lim_\alpha \lambda_\alpha R(\lambda_\alpha,A)\right\} \\
    &= i\circ \mathrm{id}_V.
  \end{aligned}
  \end{equation*}
  Because $W$ is a rigid extension, this implies $J = \mathrm{id}_W$. That means, all converging subnets of $\{F(\lambda)\}_{\lambda>0}$ have $\mathrm{id}_W$ as a limit, so $\lim_{\lambda\rightarrow \infty} F(\lambda) = \mathrm{id}_W$.

  By the general theory of one-parameter semigroups (see for example \cite[Theorem 2.5.13]{Butzer_Berens_1967}) it follows that $F(\lambda) = \lambda R(\lambda,G)$ for some generator $G$. More explicitely, we can define $G$ as
  \begin{equation*}
    G = \lambda (\mathrm{id}_W - F(\lambda)^{-1}).
  \end{equation*}
  This definition does not depend on $\lambda$ due to Hilbert's identity \eqref{eq:Hilberts_identity}. The generator $G$ is an extension of $A$, and the semigroup $\Psi(t) = e^{t G}$ is a continuous unital completely positive semiroup on $W$ that extends $\Phi$.
\end{proof}

Let us note that the proof of Theorem~\ref{theorem:extension_of_semigroups} significantly relies on the finite-dimentionality of $W$. To be exact, it relies on the fact that $\mathrm{W}$ is compact in norm topology. Also note that all the extensions are described in terms of a compact $\mathcal{G}_\infty$.

\subsection{Extension of groups}

It turns out that the extension given by Theorem~\ref{theorem:extension_of_semigroups} is unique in case $\Phi$ corresponds to the invertible dynamics, i.e. it is a one-parameter group.

First of all, we will need a simple lemma about rigid extensions.
\begin{lemma} \label{lemma:rigidity_implies_inversion}
  Let $V$ be an operator system and $W$ it's rigid extension with embedding $i : V \hookrightarrow W$. If $\phi_1,\phi_2 : V \rightarrow V$ are inverse maps
  \begin{equation*}
    \phi_1\circ\phi_2 = \phi_2\circ\phi_1 = \mathrm{id}_V,
  \end{equation*}
  then any of their extensions $\psi_1,\psi_2 : W\rightarrow W$ are also inverse
  \begin{equation*}
    \psi_1\circ\psi_2 = \psi_2\circ\psi_1 = \mathrm{id}_W.
  \end{equation*}
  Consequentially, the extension of invertible map is unique, if it exists.
\end{lemma}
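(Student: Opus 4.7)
The plan is a direct diagram chase that reduces the claim to the defining property of rigidity. Starting from UCP maps $\psi_1, \psi_2 : W \to W$ extending $\phi_1, \phi_2$ respectively (so $\psi_k \circ i = i \circ \phi_k$ for $k=1,2$), I first examine the composite $\psi_1 \circ \psi_2 : W \to W$. Pre-composing with $i$ and reassociating,
\begin{equation*}
(\psi_1 \circ \psi_2) \circ i = \psi_1 \circ (\psi_2 \circ i) = \psi_1 \circ i \circ \phi_2 = i \circ \phi_1 \circ \phi_2 = i,
\end{equation*}
where the final equality uses the hypothesis $\phi_1 \circ \phi_2 = \mathrm{id}_V$. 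Because $i : V \hookrightarrow W$ is rigid and $\psi_1 \circ \psi_2$ is itself UCP (compositions of UCP maps are UCP), the rigidity axiom forces $\psi_1 \circ \psi_2 = \mathrm{id}_W$. The identity $\psi_2 \circ \psi_1 = \mathrm{id}_W$ follows by interchanging the roles of the indices.

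For the uniqueness assertion, suppose $\phi : V \to V$ is invertible with inverse $\phi^{-1}$, and that $\phi$ admits two extensions $\psi, \psi' : W \to W$ while $\phi^{-1}$ admits some extension $\widetilde{\psi} : W \to W$ (in the main application of this lemma the existence of $\widetilde{\psi}$ is automatic, since $W$ will be the injective envelope). Applying the first part of the lemma to the inverse pair $(\phi, \phi^{-1})$ with extensions $(\psi, \widetilde{\psi})$ yields $\psi \circ \widetilde{\psi} = \widetilde{\psi} \circ \psi = \mathrm{id}_W$; applying it again with extensions $(\psi', \widetilde{\psi})$ yields $\widetilde{\psi} \circ \psi' = \mathrm{id}_W$. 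In particular $\widetilde{\psi}$ is a two-sided invertible element of the composition monoid, so left-cancellation on the equation $\widetilde{\psi} \circ \psi = \widetilde{\psi} \circ \psi'$ gives $\psi = \psi'$.

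No real obstacle is anticipated: the entire argument is a formal manipulation once rigidity is unpacked. The only point worth flagging is the implicit requirement that rigidity be applied only to maps in the appropriate class, which is why the observation that $\psi_1 \circ \psi_2$ remains UCP is recorded explicitly.
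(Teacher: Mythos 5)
Your proof is correct and follows essentially the same route as the paper: reduce $\psi_1\circ\psi_2\circ i = i$ via the intertwining relations and invoke rigidity, then deduce uniqueness from the uniqueness of inverses in the composition monoid. Your explicit remark that the uniqueness step presupposes the existence of \emph{some} extension of $\phi^{-1}$ is a point the paper leaves implicit, but it does not change the argument.
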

\begin{proof}
  The maps $\psi_1\circ\psi_2$ and $\psi_2\circ\psi_1$ extend the identity $\mathrm{id}_V$
  \begin{equation*}
    \psi_1\circ\psi_2\circ i = \psi_2\circ\psi_1\circ i = i\circ\mathrm{id}_V = i,
  \end{equation*}
  therefore by rigidity are equal to $\mathrm{id}_W$.

  If $(\psi_1, \psi_2)$ and $(\tilde{\psi}_1, \tilde{\psi}_2)$ are two pairs of inverses that extend $(\phi_1,\phi_2)$, then the pairs $(\psi_1,\tilde{\psi}_2)$ and $(\tilde{\psi}_1,\psi_2)$ are also inverses. The inverse of a map is always unique, therefore $\psi_1 = \tilde{\psi}_1$, $\psi_2 = \tilde{\psi}_2$, and the maps $\psi_1$, $\psi_2$ are unique as extensions of $\phi_1$, $\phi_2$.
\end{proof}

This lemma is connected with the result of Hamana \cite{Hamana_1985} (see also \cite[Corollary 8.1.23]{Saito_Wright_2015}) about the extensions of automorphism groups. Let us denote $\mathrm{Aut}(V)$ the group of unital completely positive automorphisms on operator system $V$. If the operator system $W$ is a $C^*$-algebra, then $\mathrm{Aut}(W)$ is a group of algebraic $*$-automorphisms on $W$ \cite[Corollary 3.2]{Choi_1974}.

\begin{theorem}[Hamana, \cite{Hamana_1985}] \label{theorem:automorphisms}
  Let $V$ be an operator system and $W$ it's injective extension with the embedding $i : V \hookrightarrow W$. Then there exists an injective homomorphism of groups $\chi : \mathrm{Aut}(V) \hookrightarrow \mathrm{Aut}(W)$. That means, any group of unital completely positive automorphisms on $V$ uniquely extends to a group of algebraic $*$-automorphisms on $W$. Also, if $\psi\in\mathrm{W}$ is an automorphism that leaves $V$ invariant, then $\psi = \chi(\psi\vert_V)$.
\end{theorem}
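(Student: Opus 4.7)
The plan is to build $\chi$ directly by using injectivity of $W$ to produce extensions, and then invoke Lemma~\ref{lemma:rigidity_implies_inversion} to promote those extensions to automorphisms and to ensure uniqueness. Given $\phi\in\mathrm{Aut}(V)$, view $i\circ\phi:V\to W$ as a UCP map into an injective system and extend it to a UCP map $\psi:W\to W$ with $\psi\circ i = i\circ\phi$. Extend $\phi^{-1}$ analogously to some $\psi'$. Since $W$ is a rigid injective extension of $V$ (the injective envelope, which in the matricial case is a $C^*$-algebra by the Choi--Effros result recalled in Section~\ref{subsec:injectivity}), Lemma~\ref{lemma:rigidity_implies_inversion} gives $\psi\circ\psi' = \psi'\circ\psi = \mathrm{id}_W$, so $\psi\in\mathrm{Aut}(W)$ and $\psi$ is the \emph{unique} UCP extension of $\phi$. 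Set $\chi(\phi):=\psi$.

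Next I would verify that $\chi$ is a group homomorphism and is injective. For $\phi_1,\phi_2\in\mathrm{Aut}(V)$, both $\chi(\phi_1\phi_2)$ and $\chi(\phi_1)\chi(\phi_2)$ are UCP automorphisms of $W$ satisfying $(\,\cdot\,)\circ i = i\circ(\phi_1\phi_2)$; the uniqueness part of Lemma~\ref{lemma:rigidity_implies_inversion} forces them to coincide. Injectivity of $\chi$ is immediate: $\chi(\phi)=\mathrm{id}_W$ implies $i\circ\phi = i$, and since the embedding $i$ is injective we conclude $\phi = \mathrm{id}_V$.

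The step requiring a little more than the preceding lemma is promoting the order-theoretic automorphism $\chi(\phi)$ to an \emph{algebraic} $*$-automorphism of the $C^*$-algebra $W$. For this one invokes Choi's multiplicative-domain argument (\cite[Corollary~3.2]{Choi_1974}, already cited in the paragraph preceding the theorem): any unital complete order isomorphism of a $C^*$-algebra onto itself automatically respects the product and the involution, and since $\chi(\phi)$ is UCP with UCP inverse $\chi(\phi^{-1})$, it is such an isomorphism. For the last clause, if $\psi\in\mathrm{Aut}(W)$ satisfies $\psi[i[V]] = i[V]$, then $\psi\vert_V := i^{-1}\circ\psi\circ i$ lies in $\mathrm{Aut}(V)$, and $\psi$ itself is a UCP extension of $\psi\vert_V$; uniqueness once more yields $\psi = \chi(\psi\vert_V)$.

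The main obstacle, rather than any single hard calculation, is the interplay between two distinct structural inputs: injectivity of $W$ (which guarantees \emph{some} extension) and rigidity of the envelope (which forces that extension to be an automorphism and to be the only one). Without rigidity, the initial extension produced by injectivity need not be invertible and need not be canonical, and one would be forced to first factor through the injective envelope $W_0 \subseteq W$ and then transport the automorphism group to $W$ via injectivity of $W$. The upgrade from UCP bijection to $*$-automorphism, although conceptually substantive, is essentially black-boxed by Choi's theorem.
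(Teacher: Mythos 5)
Your proposal is correct and follows essentially the same route as the paper: use injectivity of $W$ to produce a UCP extension of each $\phi\in\mathrm{Aut}(V)$, then use rigidity via Lemma~\ref{lemma:rigidity_implies_inversion} to make that extension unique and invertible, and derive the homomorphism property, injectivity of $\chi$, and the final clause all from that uniqueness. You are in fact slightly more explicit than the paper in two spots --- spelling out the Choi multiplicative-domain upgrade from UCP order isomorphism to $*$-automorphism, and flagging that the argument really needs $W$ to be rigid (i.e.\ the injective envelope) rather than an arbitrary injective extension --- but the underlying argument is the same.
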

\begin{proof}
  Let us construct the map $\chi$. Let us fix some $\phi\in\mathrm{Aut}(V)$. Because the extension $W$ is injective, there is some $\psi\in\mathrm{UCP}(W)$ such that $\psi\circ i = i\circ\phi$. Because $W$ is a rigid extension, by the Lemma~\ref{lemma:rigidity_implies_inversion} the extension $\psi$ is unique. Then, set $\chi(\phi) = \psi$.
  \begin{equation*}
    \begin{tikzcd}
    V \arrow[r, "\phi"] \arrow[d, "i"', hook] & V \arrow[d, "i", hook] \\
    W \arrow[r, "\chi(\phi)"']                & W
    \end{tikzcd}
  \end{equation*}

  Let us show that $\chi$ is a homomorphism. If $\phi_1,\phi_2 \in\mathrm{Aut}(V)$, then both $\chi(\phi_1\circ\phi_2)$ and $\chi(\phi_1)\circ\chi(\phi_2)$ extend $\phi_1\circ\phi_2$, therefore they are equal. The map $\chi$ is injective, because the following chain of equations hold:
  \begin{equation*}
  \begin{gathered}
    \chi(\phi_1) = \chi(\phi_2), \\
    \chi(\phi_1)\circ i = \chi(\phi_2)\circ i, \\
    i\circ\phi_1 = i\circ\phi_2, \\
    \phi_1 = \phi_2.
  \end{gathered}
  \end{equation*}
  If $\psi\in\mathrm{Aut}(W)$ preserves the subspace $V$, then $\psi = \chi(\psi\vert_V)$ by uniqueness.
\end{proof}

\begin{theorem} \label{theorem:extension_of_groups}
  Let $V$ be a matricial system and let $W$ be it's injective envelope with embedding $i : V \hookrightarrow W$. Let $\Phi : \mathbb{R} \rightarrow \mathrm{Aut}(V)$ be a continuous group of unital completely positive automorphism on $V$. Then it can be extended to a continuous group of algebraic $*$-automorphisms $\Psi : \mathbb{R} \rightarrow \mathrm{Aut}(W)$ on $W$, in a unique way.
\end{theorem}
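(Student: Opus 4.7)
The plan is to combine Theorem~\ref{theorem:extension_of_semigroups} with Lemma~\ref{lemma:rigidity_implies_inversion}, running the semigroup extension in both time directions. Since $\Phi$ is a group of UCP automorphisms, both families $\{\Phi(t)\}_{t\geq 0}$ and $\{\Phi(-t)\}_{t\geq 0}$ are continuous unital completely positive semigroups on $V$. Applying Theorem~\ref{theorem:extension_of_semigroups} to each, I obtain continuous UCP semigroups $\Psi_+, \Psi_- : \mathbb{R}_+ \rightarrow \mathrm{UCP}(W)$ on the injective envelope $W$ that extend $\Phi(t)$ and $\Phi(-t)$ respectively along $i$.

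For any $t \geq 0$ the maps $\Phi(t)$ and $\Phi(-t)$ are mutually inverse in $V$, so by Lemma~\ref{lemma:rigidity_implies_inversion} the extensions $\Psi_+(t)$ and $\Psi_-(t)$ are mutually inverse in $W$. I can therefore define
\begin{equation*}
  \Psi(t) =
  \begin{cases}
    \Psi_+(t),  & t \geq 0, \\
    \Psi_-(-t), & t < 0,
  \end{cases}
\end{equation*}
obtaining an invertible UCP map on $W$ for every $t \in \mathbb{R}$. Because $W$ is a finite-dimensional $C^*$-algebra (being the injective envelope of a matricial system), the result of Choi~\cite{Choi_1974} cited in Theorem~\ref{theorem:automorphisms} guarantees that each such UCP automorphism of $W$ is automatically an algebraic $*$-automorphism, so $\Psi(t) \in \mathrm{Aut}(W)$ in the stronger sense required.

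To check the group law, note that both $\Psi(s+t)$ and $\Psi(s)\circ\Psi(t)$ extend the invertible map $\Phi(s+t) = \Phi(s)\circ\Phi(t)$ along $i$; by the uniqueness clause of Lemma~\ref{lemma:rigidity_implies_inversion} they must coincide. Continuity of $\Psi$ on all of $\mathbb{R}$ is inherited from the continuity of $\Psi_+$ and $\Psi_-$ on $\mathbb{R}_+$. For uniqueness of the full group, any continuous group $\tilde{\Psi}$ extending $\Phi$ must, at each fixed $t$, supply an extension of the invertible map $\Phi(t)$, and Lemma~\ref{lemma:rigidity_implies_inversion} forces $\tilde{\Psi}(t) = \Psi(t)$. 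The main obstacle is essentially absent once the two-sided strategy is in place; the only subtle point worth checking is that $t \mapsto \Phi(-t)$ is indeed a UCP semigroup, which is precisely what it means for $\Phi$ to take values in $\mathrm{Aut}(V)$ rather than merely $\mathrm{UCP}(V)$.
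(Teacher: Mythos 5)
Your proposal is correct and follows essentially the same route as the paper: extend the forward and backward semigroups via Theorem~\ref{theorem:extension_of_semigroups}, glue them into a group using Lemma~\ref{lemma:rigidity_implies_inversion}, and derive uniqueness from rigidity (the paper phrases this last step through Theorem~\ref{theorem:automorphisms}, but that theorem is itself a direct consequence of the same lemma). Your explicit verification of the group law and the appeal to Choi's result for the $*$-automorphism property only make explicit what the paper leaves implicit.
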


\begin{proof}
  By the Theorem~\ref{theorem:extension_of_semigroups} there exist semigroups $\Psi_+,\Psi_- : \mathbb{R}_+ \rightarrow \mathrm{UCP}(W)$ such that
  \begin{equation*}
    \Psi_+(t)\circ i = i\circ \Phi(t), \qquad \Psi_-(t)\circ i = i\circ \Phi(-t) \quad\text{ for all }t\geq 0.
  \end{equation*}
  By the Lemma~\ref{lemma:rigidity_implies_inversion}, these semigroups are inverse to each other, $\Phi_-(t)\circ\Psi_+(t) = \mathrm{id}_W$ for all $t\geq 0$, therefore they define a continuous one-parameter group $\Psi : \mathbb{R} \rightarrow \mathrm{Aut}(W)$ so that $\Psi(t) = \Psi_+(t)$ and $\Psi(-t) = \Psi_-(t)$ for all $t\geq 0$. By the Theorem~\ref{theorem:automorphisms} this extension is unique and
  \begin{equation*}
    \Psi = \chi\circ\Phi : \mathbb{R} \rightarrow \mathrm{Aut}(W).
  \end{equation*}
\end{proof}
Also note that the generator $G$ of a group $\Psi$ is a derivation on an algebra $W$ and the group preserves pure states.

\section{Discussion} \label{sec:discussion}

The theory of completely positive semigroups on $C^*$-algebras is a classical field of study in the context of quantum probability and quantum stochastic processes \cite{Kraus_1983, Breuer_Petruccione_2002}. The most important results include the statement that the generators of uniformly continuous completely positive semigroups on $C^*$-algebras have a GKSL structure \cite{Lindblad_1976, Gorini_Kossakowski_Sudarchan_1976}. At the same time, quite a little is known about the general structure of completely positive semigroups with unbounded generators \cite{Siemon_Holevo_Werner_2017}. Another important result is concerned with the idea that completely positive semigroups on $C^*$-algebra can always be dilated to a semigroup of $*$-homomorphisms \cite{Bhat_1996,Bhat_1999,Gaebler_2013}.

Operator systems can be though of as natural generalisation of a category of $C^*$-algebras with completely positive maps, and the study of state spaces of operator systems can be naturally considered as ``non-commutative'' convex geometry \cite{Davidson_Kennedy_2019, Kennedy_Shamovich_2021}. Operator systems are also studied in the context of Generalized Probabilistic Theories \cite{Plavala_2021,Aubrun_Lami_Palazuelos_Plavala_2021}.

The Theorems~\ref{theorem:extension_of_semigroups} and \ref{theorem:extension_of_groups} therefore help us to make sure that any evolution on matricial systems has some physical meaning as Markovian quantum stochastic processes. Let us illustrate it in the context of rebit calculations.

Let us examine the most simple case  of matricial systems inside of $2\times2$-matrices $\mathbb{M}_{2}$. Let $I,X,Y,Z$ be the Pauli matrices
\begin{equation*}
  I = \begin{bmatrix}1&0\\0&1\end{bmatrix}, \quad
  X = \begin{bmatrix}0&1\\1&0\end{bmatrix}, \quad
  Y = \begin{bmatrix}0&-i\\ i&0\end{bmatrix}, \quad
  Z = \begin{bmatrix}1&0\\0&-1\end{bmatrix}.
\end{equation*}

Let $V$ be an operator system and $W$ it's injective envelope. There are $4$ different cases of operator systems that are embedded into $\mathbb{M}_{2}$.
\begin{enumerate}
  \item $\dim{V}=1$, then $V = \mathrm{span}\{I\}$ and $W = \mathbb{C}$. The trivial case.
  \item $\dim{V}=2$, for example $V=\mathrm{span}\{I,Z\}$ -- the operator system of diagonal $2\times 2$-matrices. It's envelope is a commutative algebra $W=\mathbb{C}^2$ and this system cannot posses any ``quantumness''.
  \item $\dim{V}=3$, for example $V=\mathrm{span}\{I,X,Z\}$ -- the operator system spanned by symmetric $2\times 2$-matrices with real coefficients. This system is often called a \emph{rebit} (real bit). It's envelope is $W=\mathbb{M}_{2}$.
  \item $\dim{V}=4$, then $V=\mathrm{span}\{I,X,Y,Z\}=\mathbb{M}_{2}$ is the system of one qubit, $W = \mathbb{M}_{2}$.
\end{enumerate}

Out of the four cases the most interesting is a case of rebit $V=\mathrm{span}\{I,X,Z\}$. The set of positive elements is
\begin{equation*}
  V^+ = \left\{ a I + b X + c Z = \begin{bmatrix} a+c & b \\ b & a-c \end{bmatrix} \; \middle| \, b^2 + c^2 \leq a^2 \right\}.
\end{equation*}
The Theorem~\ref{theorem:extension_of_semigroups} implies that any evolution on a rebit is determined by some evolution on a qubit which preserves the subspace of real-valued matrices.

Geometrically speaking, the state space $S(V)$ of a rebit is a disk, and the pure states are a boundary circle of that disk. The invertible dynamics on that disk uniquely correspond to the rotations of that disk, and by the Theorem~\ref{theorem:extension_of_groups} they and are described by the qubit Hamiltonian of form $\frac{\omega}{2} Y$, where $\omega\in\mathbb{R}$ is a rotation speed. That is, any automorphism group $\Phi$ on a rebit $V$ with a generator $A$ is uniquely extended to a group $\Psi$ on a qubit which has a generator $G = i \frac{\omega}{2} [ Y, \cdot\,]$, so that
\begin{equation*}
  A[a I + b X + c Z] = G[a I + b X + c Z] = \omega (- c X + b Z),
\end{equation*}
and
\begin{multline*}
  \Phi(t)[a I + b X + c Z] = \Psi(t)[a I + b X + c Z] = \\
    = a I + \{b \cos(\omega t)- c \sin(\omega t)\} X + \{b\sin(\omega t) + c\cos(\omega t)\} Z.
\end{multline*}

Now, let us consider the purely dissipative dynamics on a rebit. Let $\Delta > 0$ be a constant of dissipation and let the dynemics be of the form
\begin{equation*}
\begin{gathered}
  A[a I + b X + c Z] = - \Delta b X - \Delta c Z,\\
  \Phi(t)[a I + b X + c Z] = a I + b e^{-\Delta t} X + c e^{-\Delta t} Z.
\end{gathered}
\end{equation*}
By the Theorem~\ref{theorem:extension_of_semigroups}, this semigroup can be extended to a semigroup on a qubit. However, the extension is not unique. On the one hand, this evolution is described by a GKSL generator
\begin{equation*}
  G_1[B] = \Delta\left(\frac{1}{2} X B X + \frac{1}{2} Z B Z - B\right),
\end{equation*}
on the other hand, it is also possible to use the generator
\begin{equation*}
  G_2[B] = \frac{4}{3}\Delta\left(\frac{1}{3} X B X + \frac{1}{3} Y B Y + \frac{1}{3} Z B Z - B\right).
\end{equation*}
Both of these generators act on a rebit in the same way, but they are not equivalent on a qubit:
\begin{equation*}
\begin{gathered}
  A[a I + b X + c Z] = G_1[a I + b X + c Z] = G_2[a I + b X + c Z], \\
  G_1[Y] = -2 \Delta Y, \quad G_2[Y] = - \Delta Y.
\end{gathered}
\end{equation*}

More generally, given a $d$-dimensional Hilbert space, one can consider the matricial system spanned by real matrices
\begin{equation*}
  V = \mathrm{span}\left\{ v\in\mathbb{M}_{d} \, \middle| \, v \text{ is a symmetric matrix with real entries } \right\}.
\end{equation*}
That operator system could be understood as representing the $d$-dimensional ``real-valued'' quantum mechanics. The Theorems~\ref{theorem:extension_of_semigroups} and \ref{theorem:extension_of_groups} then imply that ``real-valued'' physical system $V$ corresponds to the ``complex-valued'' system $W = \mathbb{M}_{d}$. On the other hand, it is known that any quantum computation on $n$ qubits may be represented as a computation on $n+1$ rebits \cite{Rudolph_Grover_2002,Delfrosse_et_al_2015}.

The operator systems can also be treated as ``non-commutative'' versions of graphs \cite{Duan_Severini_Winter_2012}, and this point of view has applications to the theory of quantum error correction \cite{Amosov_Mokeev_Pechen_2019,Amosov_Mokeev_Pechen_2021,Amosov_Mokeev_2020}. The presented theorems may be of interest to the study of dynamics on such graphs.

Another interesting direction concerns the application of Theorem~\ref{theorem:automorphisms} together with Theorem~\ref{theorem:extension_of_groups}. These theorems state that any group of continuous symmetries on operator system is induced by an authomorphism group of overlying $C^*$-algebra. Therefore the embeddability properties of operator systems \cite{Fritz_Netzer_Thom_2017} could be characterized by it's symmetry group of completely positive maps.

\section*{Acknowledgements}
  I am grateful to G. G. Amosov for useful discussion and comments.
The work is supported by Russian Science Foundation under the grant
no. 19-11-00086 and performed in Steklov Mathematical Institute of
Russian Academy of Sciences.

\bibliographystyle{ieeetr}
\bibliography{bibliography}

\end{document}